\newcounter{alphthm}
\newtheorem{thm}{Theorem}
\newtheorem{prop}{Proposition}
\newtheorem{cor}{Corollary}
\newtheorem{lem}{Lemma}
\newcommand{\be}{\begin{equation}}
\newcommand{\ee}{\end{equation}}
\newcommand{\ben}{\begin{enumerate}}
\newcommand{\een}{\end{enumerate}}
\def\beq{\begin{equation}}
\def\eeq{\end{equation}}
\begin{document}
\title{$(1,1)$-Tensor sphere bundle of Cheeger-Gromoll type}
\author{E. Peyghan, L. Nourmohammadi Far, A. Tayebi}

\maketitle
\begin{abstract}
We construct a metrical framed $f(3,-1)$-structure on the (1, 1)-tensor
bundle of a Riemannian manifold equipped with a Cheeger-Gromoll type
metric and by restricting this structure to the (1, 1)-tensor sphere bundle,
we obtain an almost metrical paracontact structure on the (1, 1)-tensor
sphere bundle. Moreover, we show that the (1, 1)-tensor sphere bundles
endowed with the induced metric are never space forms.
\end{abstract}
\textbf{Keywords:} Cheeger-Gromoll metric, framed $f(3,-1)$-structure, (1, 1)-tensor
sphere bundle, sectional curvature.

\section{Introduction}
Maybe the best known Riemannian metric on the tangent bundle is that introduced
by Sasaki in 1958 (see \cite{S}), but in most cases the study of some geometric
properties of the tangent bundle endowed with this metric led to the flatness
of the base manifold. In the next years, the authors were interested in finding
other lifted structures on the tangent bundles, cotangent and tangent sphere bundles with quite interesting properties
(see \cite{AS1}, \cite{CG}-\cite{DO2}, \cite{Se}).

The tangent sphere bundle $T_rM$ consisting of spheres of constant radius
r seen as hypersurfaces of the tangent bundle $TM$ have important applications in
geometry. In the last years some interesting results were obtained by endowing
the tangent sphere bundles with Riemannian metrics induced by the natural
lifted metrics from TM, which are not Sasakian (see \cite{AK}, \cite{DO}, \cite{M}).

Tensor bundles $T^p_qM$ of type $(p, q)$ over a differentiable manifold $M$ are prime
examples of fiber bundles, which are studied by mathematicians such as Ledger,
Yano, Cengiz and Salimov \cite{CS}, \cite{LY}, \cite{SC}. The tangent bundle $TM$ and cotangent
bundle $T^*M$ are the special cases of $T^p_qM$.

In \cite{SG}, Salimov and Gezer introduced the Sasaki metric $^Sg$ on the (1, 1)-tensor
bundle $T^1_1M$ of a Riemannian manifold $M$ and studied some geometric properties
of this metric. By the similar method used in the tangent bundle, the same
authors defined in \cite{PTN}  the Cheeger-Gromoll type metric $^{CG}g$ on $T^1_1M$ which is
an extension of Sasaki metric. Then they studied some relations between the
geometric properties of the base manifold $(M, g)$ and $(T^1_1M, {}^{CG}g)$.
In the present paper, we consider Cheeger-Gromoll type metric $^{CG}g$ on $T^1_1M$
and using it we introduce a metrical framed $f(3,-1)$-structure on $T^1_1M$. Then,
by restricting this structure to the (1, 1)-tensor sphere bundle of constant radius $r$, $T^1_{1r}M$, we obtain
a metrical almost paracontact structure on $T^1_{1r}M$. Finally, we show that the
(1, 1)-tensor sphere bundles endowed with the induced metric are never space
forms.
\section{Preliminaries}
Let $M$ be a smooth $n$-dimensional manifold. We define the bundle of $(1, 1)$-tenors on $M$ as $T^1_1M=\coprod_{p\in M}T^1_1(p)$, where $\coprod$
denotes the disjoint union, and we call it $(1, 1)$-tensor bundle. We define also the projection $\pi:T^1_1M\rightarrow M$
 to $p$. If $(x^i)$ are any local coordinates on $U\subset M$, and $p\in U$, the coordinate vectors $\{\partial_i\}$, where $\partial_i:=\frac{\partial}{\partial x^i}$, 
 form a basis for $T_pM$ whose dual basis is $dx^i$. Any tensor $t\in T^1_1M$ can be
expressed in terms of this basis as $t=t^i_j\partial_i\otimes dx^j$.

For any coordinate chart $(U, (x^i))$ on $M$, correspondence $t\in T^1_1(x)\rightarrow (x, (t^i_j))\in U\times R^{n^2}$
determines local trivializations $\phi:\pi^{-1}(U)\subset T^1_1M\rightarrow U\times R^{n^2}$ , that is, $T^1_1M$ is a vector bundle on $M$.
Therefore each local coordinate neighborhood  $\{(U, x^j)\}_{j=1}^n$ in $M$ induces on $T^1_1M$ a local coordinate
neighborhood $\{\pi^{-1}(U);\ \  x^j,\ x^{\bar j}=t^i_j\}_{j=1}^n$, $\bar{j}=n+j$, that is, $T^1_1M$ is a smooth manifold of dimension $n+n^2$.

We denote by $F(M)$ and $\Im^1_1(M)$, the ring of real-valued $C^\infty$ functions and the space of all $C^\infty$ tensor fields of type $(1, 1)$ on $M$. If
$\alpha\in\Im^1_1(M)$, then by contraction it is regarded as a
function on $T^1_1M$, which we denote it by $\imath\alpha$. If
$\alpha$ has the local expression
$\alpha=\alpha_i^j\frac{\partial}{\partial x^j}\otimes dx^i$ in a
coordinate neighborhood $U(x^j)\subset M$, then
$\imath(\alpha)=\alpha(t)$ has the local expression
$\imath\alpha=\alpha_i^jt^i_j$ with respect to the coordinates
$(x^j, x^{\bar j})$ in $\pi^{-1}(U)$.

Suppose that $A\in\Im^1_1(M)$. Then the vertical lift $^VA\in\Im^1_0(T^1_1M)$ of $A$ has the following local expression with respect to the
coordinates $(x^j, x^{\bar j})$ in $T^1_1M$
\begin{equation}\label{2.1}
^VA={}^VA^{\bar j}\partial_{\bar j},
\end{equation}
where $^VA^{\bar j}=A^i_j$ and $\partial_{\bar j}:=\frac{\partial}{\partial x^{\bar j}}=\frac{\partial}{\partial t^i_j}$. Moreover,
if $V\in\Im^1_0(M)$, then the complete lift $^CV$ and the horizontal lift $^HV\in\Im^1_0(T^1_1M)$ of $V$ to
$T^1_1M$ have the following local expression with respect to the
coordinates $(x^j, x^{\bar j})$ in $T^1_1M$ (see \cite{CS} and \cite{LY})
\begin{align}
^CV&=V^j\partial_j+(t^m_j(\partial_mV^i)-t^i_m(\partial_jV^m))\partial_{\bar j},\label{2.2}\\
^HV&=V^j\partial_j+V^s(\Gamma^m_{sj}t^i_m-\Gamma^i_{sm}t^m_j)\partial_{\bar j},\label{2.3}
\end{align}
where $\Gamma^k_{ij}$ are the local components of $\nabla$ on $M$.

Let $U(x^h)$ be a local chart of $M$. By using (\ref{2.1}) and
(\ref{2.3}) we obtain
\begin{eqnarray}
e_j:\!\!\!\!&=&\!\!\!\!{}^H\partial_j={}^H(\delta^h_j\partial_h)=\delta^h_j\partial_h+(\Gamma^s_{jh}t^k_s-\Gamma^k_{js}t^s_h)\partial_{\bar
h},\\
e_{\bar{j}}:\!\!\!\!&=&\!\!\!\!{}^V(\partial_i\otimes
dx^j)={}^V(\delta^k_i\delta^j_h\partial_k\otimes
dx^h)=\delta^k_i\delta^j_h\partial_{\bar h},
\end{eqnarray}
where $\delta^h_j$ is the Kronecker's symbol and
$\bar{j}=n+1,\ldots,n+n^2$. These $n+n^2$ vector fields are
linearly independent and generate, respectively, the horizontal
distribution of $\nabla$ and vertical distribution of $T^1_1M$.
Indeed, we have $^HX=X^je_j$ and $^VA=A^i_je_{\bar{j}}$ (see
\cite{SG}).
 The set $\{e_\beta\}=\{e_{j}, e_{\bar{j}}\}$ is called the frame adapted to the affine connection $\nabla$ on $\pi^{-1}(U)\subset T^1_1M$.
 \begin{lem}\label{esileila}
Let $\alpha_1$, $\alpha_2$, $\alpha_3$, $\alpha_4$ be smooth functions on $T^1_1M$ such that
\begin{equation}\label{s1}
\alpha_1g_{ti}g^{lj}\delta^m_r\delta^v_n+\alpha_2g_{ni}g^{mj}\delta^l_r\delta^v_t+
\alpha_3\overline{t}^m_n\overline{t}^j_i\delta^l_r\delta^v_t
+\alpha_4\overline{t}^l_t\overline{t}^j_i\delta^m_r\delta^v_n=0.
\end{equation}
Then $\alpha_1=\alpha_2=\alpha_3=\alpha_4=0$.
\end{lem}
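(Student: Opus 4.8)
The plan is to read (\ref{s1}) as the assertion that four explicit tensor templates are linearly independent over the smooth functions, and to establish this independence pointwise together with a continuity argument. Fix a point $(x,\overline t)\in T^1_1M$ and choose a $g$-orthonormal frame at $x$, so that $g_{ij}=\delta_{ij}$ and $g^{ij}=\delta^{ij}$ there; since (\ref{s1}) is a frame-independent identity among tensors, no generality is lost. In this frame (\ref{s1}) becomes $\alpha_1 T_1+\alpha_2 T_2+\alpha_3 T_3+\alpha_4 T_4=0$, where $T_1=\delta_{ti}\delta^{lj}\delta^m_r\delta^v_n$, $T_2=\delta_{ni}\delta^{mj}\delta^l_r\delta^v_t$, $T_3=\overline t^m_n\overline t^j_i\delta^l_r\delta^v_t$ and $T_4=\overline t^l_t\overline t^j_i\delta^m_r\delta^v_n$, and the $\alpha_k$ are now the numbers $\alpha_k(x,\overline t)$.

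First I would exploit the fact that the Kronecker factors pair the free indices in distinct ways. The templates split into the two groups $\{T_1,T_4\}$, which both carry $\delta^m_r\delta^v_n$, and $\{T_2,T_3\}$, which both carry $\delta^l_r\delta^v_t$. Choosing index values with $v=n$, $m=r$ but $v\ne t$ annihilates $T_2,T_3$ and leaves a relation between $T_1$ and $T_4$, and the reverse choice isolates $T_2,T_3$. Within each pair the two members are separated by the quadratic factors $\overline t^m_n\overline t^j_i$ and $\overline t^l_t\overline t^j_i$: assigning the numerical values $1,2$ to the indices produces four substitutions whose evaluations form a linear system in $(\alpha_1,\alpha_2,\alpha_3,\alpha_4)$ with coefficient matrix depending polynomially on the entries of $\overline t$. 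A short computation shows this determinant is a nonzero polynomial (for instance it involves the product $\overline t^1_2\,\overline t^2_1$ and differences such as $(\overline t^2_1)^2-(\overline t^1_1)^2$), hence is nonzero whenever $\overline t$ lies in a dense open subset of the fiber; for such $\overline t$ the system forces $\alpha_1(x,\overline t)=\cdots=\alpha_4(x,\overline t)=0$.

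Finally I would upgrade this to an identity. Each $\alpha_k$ is smooth, hence continuous, and vanishes on a dense subset of every fiber, so it vanishes on all of $T^1_1M$. This continuity step is genuinely necessary: at the zero tensor $\overline t=0$ (and more generally on the degeneracy locus of the above determinant) the templates $T_3,T_4$ vanish identically, so (\ref{s1}) imposes no condition whatsoever on $\alpha_3,\alpha_4$ there, and only continuity recovers their vanishing from the nearby generic fibers.

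The main obstacle is precisely the overlap of the delta-patterns noted above: because no single substitution isolates one coefficient for generic $\overline t$, the argument cannot be reduced to four one-line contractions but must pass through an invertible $4\times4$ system, with nondegeneracy of $g$ justifying the orthonormal reduction and genericity plus continuity finishing the proof. One should also keep in mind that $\dim M\ge 2$ is needed here, since for $n=1$ all indices coincide, $T_1=T_2$ and $T_3=T_4$, and the conclusion fails.
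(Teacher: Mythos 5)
Your proof is correct, but it takes a genuinely different route from the paper's. The paper argues by contractions and fiberwise differentiation: it contracts (\ref{s1}) with $\overline{t}^r_v$ so that the $\alpha_3$- and $\alpha_4$-terms coalesce into a single term cubic in $t$, separates the resulting terms by their polynomial degree in the fiber coordinates (yielding $\alpha_3=-\alpha_4$ and $\alpha_1=-\alpha_2$), and then finishes with the contractions $g_{jh}g^{ik}\delta^h_m\delta^n_k$ and $t^j_i t^m_n$. You instead work pointwise: an orthonormal frame at the base point, the decoupling of $\{T_1,T_4\}$ from $\{T_2,T_3\}$ through the two distinct Kronecker patterns (which the paper never exploits explicitly), generic invertibility of the resulting $2\times 2$ blocks, and density plus continuity. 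Your version is more elementary and is rigorous precisely where the paper's sketch is loose: the ``differentiate three times'' step there tacitly treats the $\alpha_k$ as constant along the fibers, while your genericity-and-continuity argument makes the degree separation honest, isolates the degenerate locus ($t=0$, where $T_3,T_4$ vanish and impose no condition on $\alpha_3,\alpha_4$), and surfaces the hypothesis $\dim M\ge 2$, which the paper leaves implicit. What the paper's version buys is brevity and independence of any frame choice. The only detail you defer is the determinant computation, but it is immediate --- for instance the substitutions $(t,i,l,j)=(1,1,1,1)$ and $(1,2,1,1)$ in the $\{T_1,T_4\}$ block give the determinant $\overline{t}^1_1\,\overline{t}^1_2$, a nonzero polynomial --- so this is a matter of exposition rather than a gap.
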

\begin{proof}
Contacting (\ref{s1}) with $\overline{t}^r_v$, then
differentiating the obtained expression three times, it follows
that, $\alpha_3=-\alpha_4$. Also differentiating the remaining
expression two times, we have
\[
\alpha_1g_{ti}g^{lj}\overline{t}^m_n-\alpha_2g_{ni}g^{mj}\overline{t}^l_t=0,
\]
Contacting the above equation with $t^j_i$, yield
$\alpha_1=-\alpha_2$.
Multiplying (\ref{s1}) by $g_{jh}g^{ik}$,
$\delta_m^h\delta^n_k$, we obtain
$\alpha_3=\alpha_4=0$. Finally contacting (\ref{s1}) with $t^j_i$, $t^m_n$, we conclude that,
$\alpha_1=\alpha_2=0$.
\end{proof}
 \section{ Cheeger-Gromoll type metric on $T_1^1M$}
For each $p\in M$ the extension of the scalar product $g$, denoted by
$G$, is defined on the tensor space $\pi^{-1}(p)=T^1_1(p)$ by
\[
G(A, B)=g_{it}g^{jl}A^i_jB^t_l,\ \ \ A, B\in\Im^1_1(p),
\]
where $g_{ij}$ and $g^{ij}$ are the local covariant and
contravariant tensors associated to the metric $g$ on $M$.

Now, we consider on $T^1_1M$ a Riemannian metric $^{CG}g$ of Cheeger-Gromoll type, as follows:
\begin{equation}\label{3.7}
\left\{
\begin{array}{cc}
^{CG}g(^VA,^VB)={}^V\Big(a G(A,B)+b G(t,A)G(t,B)\Big),&\\
\hspace{-3cm} ^{CG}g(^HX,^HY)={}^V(g(X,Y)),&\\
\hspace{-4.5cm} ^{CG}g(^VA,^HY)=0,&
\end{array}
\right.
\end{equation}
for each $X, Y\in\Im^1_0(M)$ and $A, B\in\Im^1_1(M)$, where $a$ and
$b$ are smooth functions of
$\tau=||t||^2=t^i_jt^t_lg_{it}(x)g^{jl}(x)$ on $T^1_1M$ that
satisfy the conditions $a>0$ and $a+b\tau>0$.

The symmetric matrix of type $2n\times 2n$
\begin{equation}
\left(
\begin{array}{cc}
g_{jl}&0\\
0&ag^{jl}g_{it}+b\bar{t}^j_i\bar{t}^l_t
\end{array}
\right),
\end{equation}
associated to the metric $^{CG}g$ in the adapted frame $\{e_\beta\}$,
has the inverse
\begin{equation}
\left(
\begin{array}{cc}
g^{jl}&0\\
0&\frac{1}{a}g_{jl}g^{it}-\frac{b}{a(a+b\tau)}t^i_jt^t_l
\end{array}
\right),
\end{equation}
where $\bar{t}^j_i=g^{jh}g_{ik}t^k_h$. In the special case, if $a=1$ and $b=0$, we have the Sasaki metric $^Sg$ (see \cite{SG}).

Let $\varphi=\varphi^i_j\frac{\partial}{\partial x^i}\otimes dx^j$
be a tensor field on $M$. Then
$\gamma\varphi=(t^m_j\varphi^i_m)\frac{\partial}{\partial x^{\bar
j}}$ and
$\widetilde{\gamma}\varphi=(t^i_m\varphi^m_j)\frac{\partial}{\partial
x^{\bar j}}$ are vector fields on $T^1_1M$. The bracket
operation of vertical and horizontal vector fields is given by the
formulas
\begin{eqnarray}
&&[^VA, ^VB]=0,\ \ [^HX, ^VA]=^V(\nabla_XA),\label{bracket0}\\
&&[^HX, ^HY]=^H[X, Y]+(\widetilde{\gamma}-\gamma)R(X, Y),\label{bracket}
\end{eqnarray}
where $R$ denotes the curvature tensor field of the connection
$\nabla$ and $\widetilde{\gamma}-\gamma:
\varphi\rightarrow\Im^1_0(T^1_1M)$ is the operator defined by
\[
(\widetilde{\gamma}-\gamma)\varphi=\left(
\begin{array}{c}
0\\
t^i_m\varphi^m_j-t^m_j\varphi^i_m
\end{array}
\right),\ \ \ \forall\varphi\in\Im^1_1(M).
\]
\begin{prop}
The Levi-Civita connection $^{CG}\nabla$ associated to the Riemannian
metric $^{CG}g$ on the $(1, 1)$-tensor bundle $T^1_1M$ has the form
\begin{eqnarray*}
{}^{CG}{\nabla}_{e_l}^{e_j}\!\!\!\!&=&\!\!\!\!\Gamma^r_{lj}e_r+\frac{1}{2}(R_{ljr}^{\
\ \ s}t^v_s-R_{ljs}^{\ \ \ v}t^s_r)e_{\overline{r}},\nonumber\\
{}^{CG}{\nabla}_{e_{\overline{l}}}^{e_j}\!\!\!\!&=&\!\!\!\!\frac{a}{2}(g_{ta}R_{\
\ j }^{sl\ r}t^a_s-g^{lb}R_{tsj}^{\ \ \ r}t^s_b)e_r
,\nonumber\\
{}^{CG}{\nabla}_{e_l}^{e_{\overline{j}}}\!\!\!\!&=&\!\!\!\!\frac{a}{2}(g_{ia}R_{\
\ l }^{sj\ r}t^a_s-g^{jb}R_{isl}^{\ \ \ r}t^s_b)e_r
+(\Gamma^v_{li}\delta^j_r-\Gamma^j_{lr}\delta^v_i)e_{\overline{r}},\nonumber\\
{}^{CG}{\nabla}_{e_{\overline{l}}}^{e_{\overline{j}}}
\!\!\!\!&=&\!\!\!\!(L(\overline{t}^l_t\delta^j_r\delta^v_i+\overline{t}^j_i\delta^l_r\delta^v_t)+M
g^{lj}g_{ti}t^v_r+N\overline{t}^l_t\overline{t}^j_it^v_r)e_{\overline{r}},
\end{eqnarray*}
where $R_{ljr}^{\ \ \ s}$ are the components of the curvature tensor field of the Levi-Civita
connection on the base manifold $(M, g)$ and $L:=\frac{a'}{a}$, $M:=\frac{-a'+2b}{a+b\tau}$,
$N:=\frac{b'a-2a'b}{a(a+b\tau)}$.
\end{prop}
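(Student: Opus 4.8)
The plan is to compute $^{CG}\nabla$ directly from the Koszul formula
\[
2\,{}^{CG}g(\nabla_XY,Z)=X\,{}^{CG}g(Y,Z)+Y\,{}^{CG}g(X,Z)-Z\,{}^{CG}g(X,Y)+{}^{CG}g([X,Y],Z)-{}^{CG}g([X,Z],Y)-{}^{CG}g([Y,Z],X),
\]
applied to all pairs of adapted frame fields $X,Y\in\{e_l,e_{\overline l}\}$ and tested against $Z\in\{e_r,e_{\overline r}\}$. Since the adapted frame is not a coordinate frame, I first assemble the three ingredients the formula needs: the bracket relations, the metric components, and the frame-derivatives of those components. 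From (\ref{bracket0}) and (\ref{bracket}) the brackets in the adapted frame are $[e_{\overline l},e_{\overline j}]=0$, the horizontal-vertical bracket $[e_l,e_{\overline j}]={}^V(\nabla_{\partial_l}(\partial_i\otimes dx^j))$ reducing to Christoffel terms, and the horizontal-horizontal bracket $[e_l,e_j]=(\widetilde{\gamma}-\gamma)R(\partial_l,\partial_j)$ carrying the curvature operator. The metric components are read off the symmetric matrix: $^{CG}g(e_l,e_j)=g_{lj}$, $^{CG}g(e_l,e_{\overline j})=0$, and $^{CG}g(e_{\overline l},e_{\overline j})=ag^{jl}g_{it}+b\,\overline t^j_i\overline t^l_t$, with the barred indices understood as the pairs they encode.

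The frame-derivatives are the delicate input. Horizontal fields annihilate $\tau=\|t\|^2$, since parallel transport preserves the fibre norm, so $e_l(a)=e_l(b)=0$; a vertical field instead produces $e_{\overline l}(\tau)=2\overline t^l_t$, whence $e_{\overline l}(a)=2a'\overline t^l_t$ and $e_{\overline l}(b)=2b'\overline t^l_t$. These are precisely the terms that feed the derivatives $a',b'$ into the final coefficients. With these in hand, each $\nabla_XY$ is isolated by contracting $^{CG}g(\nabla_XY,\cdot)$ with the inverse matrix whose blocks are $g^{jl}$ and $\tfrac1a g_{jl}g^{it}-\tfrac{b}{a(a+b\tau)}t^i_jt^t_l$.

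The horizontal-horizontal case splits cleanly: testing against $e_r$ returns the base Christoffel symbols $\Gamma^r_{lj}$, while testing against $e_{\overline r}$ sees only the curvature bracket and, after the $a$ from the vertical block cancels against the $1/a$ of its inverse, yields $\tfrac12(R_{ljr}^{\ \ \ s}t^v_s-R_{ljs}^{\ \ \ v}t^s_r)$. The mixed cases arise from the single surviving bracket pairing a curvature term against a vertical field, where the factor $a$ enters through the vertical metric block; this produces the curvature expressions with coefficient $\tfrac a2$, together with, in the $^{CG}\nabla_{e_l}e_{\overline j}$ case, the connection terms $\Gamma^v_{li}\delta^j_r-\Gamma^j_{lr}\delta^v_i$.

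The main obstacle will be the vertical-vertical piece $^{CG}\nabla_{e_{\overline l}}e_{\overline j}$: here every term of the Koszul formula is a vertical derivative of $ag^{jl}g_{it}+b\,\overline t^j_i\overline t^l_t$, and the result must then be contracted with the full inverse block $\tfrac1a g_{jl}g^{it}-\tfrac{b}{a(a+b\tau)}t^i_jt^t_l$. Collecting the resulting monomials in the three independent types $\overline t\,\delta\delta$, $g\otimes g\cdot t$, and $\overline t\,\overline t\,t$, and matching them against $L(\overline t^l_t\delta^j_r\delta^v_i+\overline t^j_i\delta^l_r\delta^v_t)+Mg^{lj}g_{ti}t^v_r+N\overline t^l_t\overline t^j_it^v_r$, is what produces the stated values $L=\tfrac{a'}{a}$, $M=\tfrac{-a'+2b}{a+b\tau}$, $N=\tfrac{b'a-2a'b}{a(a+b\tau)}$; the denominators $a$ and $a+b\tau$ are exactly those carried by the inverse block. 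Lemma \ref{esileila} guarantees that this identification is unique, since any two admissible coefficient tuples would differ by a relation of the form (\ref{s1}), forcing their differences to vanish.
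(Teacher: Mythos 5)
Your proposal is correct and follows essentially the same route as the paper: the paper's own argument consists precisely of recording $^HX(\tau)=0$ and $^VA(\tau)=2G(t,A)$ (hence $e_{\overline l}(\tau)=2\overline{t}^l_t$) and then applying the Koszul formula in the adapted frame together with the bracket relations (\ref{bracket0})--(\ref{bracket}), which is exactly your plan. The only cosmetic difference is your appeal to Lemma \ref{esileila} for uniqueness of the coefficients $L,M,N$, which is harmless but unnecessary since the Levi-Civita connection is already unique.
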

In the following sections we consider the subset $T_{1r}^1M$ of
$T_{1}^1M$ consisting of sphere of constant radius $r$.
Now, we consider the $(1, 1)$- tensor field P on $T^1_1M$ as follows
\[
\left\{
\begin{array}{ccc}
P^HX=c_1{}^V(X\otimes \widetilde{E})+d_1g(X,E){}^V(E\otimes \widetilde{E}),\\
\hspace{-1cm}P^V(X\otimes \widetilde{E})=c_2{}^HX+d_2g(X,E){}^HE,\\
\hspace{-4.5cm}P(^VA)={}^VA,\\
\end{array}
\right.
\]
where $c_1$, $c_2$, $d_1$, $d_2$ are smooth functions of the energy density $t$ and $\widetilde{E}=g\circ E\in\Im^0_1(M)$. Using
the adapted frame $\{e_i, E_je_{\bar j}, e_{\bar j}\}$ to $T^1_1M$, $P$ has the following locally expression
\begin{equation}\label{3.12}
\left\{
\begin{array}{ccc}
P(e_i)=c_1E_je_{\bar{j}}+d_1E_iE^vE_re_{\bar{r}},\\
\hspace{-0.5cm}P(E_je_{\bar{j} })=c_2e_i+d_2E_iE^re_r,\\
\hspace{-3cm}P(e_{\bar{r}})=e_{\bar{r}},\\
\end{array}
\right.
\end{equation}
where $E_k=g_{rk}E^r$. In \cite{PTN}, the following theorem proved.
\begin{thm}\label{Th}
The natural tensor field $P$ of type (1, 1) on $T^1_1M$, defined
by the relations (\ref{3.12}), is an almost product structure
on $T^1_1M$, if and only if its coefficients are related by
\begin{equation}\label{3.13}
c_1c_2=1,\ \ \ \ (c_1+d_1||E||^2)(c_2+d_2||E||^2)=1.
\end{equation}
\end{thm}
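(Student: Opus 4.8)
The plan is to use that a $(1,1)$-tensor field $P$ on $T^1_1M$ is an almost product structure exactly when $P^2=\mathrm{Id}$ (with $P\neq\pm\mathrm{Id}$, which holds automatically here since $P$ interchanges horizontal and vertical directions). Thus the whole statement reduces to computing $P^2$ from the defining relations of $P$ and imposing that it be the identity endomorphism. The tangent space of $T^1_1M$ splits into the horizontal distribution, the subspace spanned by the vertical lifts $^V(X\otimes\widetilde{E})$, and the remaining vertical directions on which $P$ acts as the identity by the third defining relation; on the last of these $P^2=\mathrm{Id}$ is immediate, so it suffices to evaluate $P^2$ on lifts of the form $^HX$ and $^V(X\otimes\widetilde{E})$.

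First I would apply $P$ twice to a horizontal lift. By the first relation $P\,{}^HX=c_1\,{}^V(X\otimes\widetilde{E})+d_1 g(X,E)\,{}^V(E\otimes\widetilde{E})$; applying the second relation to each vertical term, and noting that $^V(E\otimes\widetilde{E})$ is the case $X=E$ so that $P\,{}^V(E\otimes\widetilde{E})=(c_2+d_2\|E\|^2)\,{}^HE$ via $g(E,E)=\|E\|^2$, collecting terms gives
\[
P^2({}^HX)=c_1c_2\,{}^HX+g(X,E)\bigl(c_1d_2+c_2d_1+d_1d_2\|E\|^2\bigr){}^HE .
\]
Carrying out the identical computation on $^V(X\otimes\widetilde{E})$ — now using the two relations in the opposite order — yields the mirror identity, with $^HX,{}^HE$ replaced by $^V(X\otimes\widetilde{E}),{}^V(E\otimes\widetilde{E})$ and with precisely the same scalar coefficients. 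The same two coefficients also drop out of a direct computation in the adapted frame using (\ref{3.12}).

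Imposing $P^2=\mathrm{Id}$, I would first take $X$ with $g(X,E)=0$, which forces $c_1c_2=1$, and then take $X=E$, which forces $c_1d_2+c_2d_1+d_1d_2\|E\|^2=0$; conversely these two equalities clearly give $P^2=\mathrm{Id}$ on every distribution. It remains to rewrite them in the stated form: under $c_1c_2=1$ one has
\[
(c_1+d_1\|E\|^2)(c_2+d_2\|E\|^2)=c_1c_2+\|E\|^2\bigl(c_1d_2+c_2d_1+d_1d_2\|E\|^2\bigr)=1+\|E\|^2\bigl(c_1d_2+c_2d_1+d_1d_2\|E\|^2\bigr),
\]
so the second requirement is equivalent (for $E\neq0$) to $(c_1+d_1\|E\|^2)(c_2+d_2\|E\|^2)=1$, which is exactly the second equation in (\ref{3.13}). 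The one step demanding care is the bookkeeping of the $E$-direction term $^V(E\otimes\widetilde{E})$ produced inside $P^2$: tracking its coefficient correctly and checking that the horizontal and vertical passes give the same scalar condition is where a slip would most easily occur, while the rest is routine linear algebra.
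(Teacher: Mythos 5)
Your proof is correct and is the standard argument: you compute $P^2$ separately on the three invariant pieces (horizontal lifts, the span of the lifts ${}^V(X\otimes\widetilde{E})$, and the complementary vertical directions on which $P$ is the identity), extract the two scalar conditions $c_1c_2=1$ and $c_1d_2+c_2d_1+d_1d_2\|E\|^2=0$, and correctly observe that under $c_1c_2=1$ the second condition is equivalent to the product form in (\ref{3.13}); note that the paper itself does not prove this theorem but quotes it from \cite{PTN}, and the adapted-frame expression (\ref{3.12}) it records is exactly the data your computation uses. The only implicit hypotheses worth flagging are that $E$ is nowhere vanishing and that $\dim M\geq 2$, so that a nonzero $X$ with $g(X,E)=0$ exists to isolate $c_1c_2=1$ from the $E$-direction term; both are assumed throughout the paper.
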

\begin{thm}\label{TH00}
$({}^{CG}g, P)$ is an Riemannian almost product structure on $T^1_1M$ if and only if
\begin{equation}\label{3.14}
c_1=\frac{1}{\sqrt{a}||E||},\ c_2=||E||\sqrt{a},\ d_1=\frac{-2}{\sqrt{a}||E||^3},\ d_2=\frac{-2\sqrt{a}}{|E||},
\end{equation}
and (\ref{3.13}) hold good.
\end{thm}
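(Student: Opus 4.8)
The plan is to use the standard characterisation that $({}^{CG}g,P)$ is a Riemannian almost product structure precisely when $P^2=\mathrm{Id}$ and $P$ is an isometry of ${}^{CG}g$, i.e. ${}^{CG}g(PZ,PW)={}^{CG}g(Z,W)$ for all vector fields $Z,W$ on $T^1_1M$. The almost product condition $P^2=\mathrm{Id}$ is already settled by Theorem \ref{Th}, which identifies it with the two relations (\ref{3.13}); so throughout I would assume (\ref{3.13}) and concentrate entirely on turning the isometry condition into equations for $c_1,c_2,d_1,d_2$.

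First I would use the splitting of $T(T^1_1M)$ into the horizontal distribution, the $n$-dimensional vertical subspace spanned by the ${}^V(X\otimes\widetilde E)$, and its complement, on which $P$ acts as the identity by (\ref{3.12}). Since $P$ interchanges the horizontal distribution with this special vertical subspace and fixes the complementary vertical directions, verifying the isometry condition on the adapted frame reduces to three families: the horizontal pairs $({}^HX,{}^HY)$, the special vertical pairs $({}^V(X\otimes\widetilde E),{}^V(Y\otimes\widetilde E))$, and the mixed pairs, which must reproduce the orthogonality of horizontal and vertical vectors recorded in (\ref{3.7}).

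Next, for each family I would substitute (\ref{3.12}) and expand using (\ref{3.7}) together with the extension $G$, repeatedly using $G(X\otimes\widetilde E,Y\otimes\widetilde E)=\|E\|^2 g(X,Y)$. The horizontal--horizontal identity, compared with ${}^{CG}g({}^HX,{}^HY)={}^V(g(X,Y))$, yields $a\|E\|^2 c_1^2=1$ together with the vanishing of the coefficient of $g(X,E)g(Y,E)$, which pins down $c_1$ and $d_1$; the special vertical identity produces the analogous relations for $c_2,d_2$. I expect the genuine obstacle to be the Cheeger--Gromoll correction $b\,G(t,\cdot)G(t,\cdot)$, which contributes the quadratic fibre terms $\overline{t}^j_i\overline{t}^l_t$: after collecting everything in the adapted frame the isometry condition takes exactly the component shape (\ref{s1}), and the decisive step is to invoke Lemma \ref{esileila} so that the coefficients $\alpha_1,\dots,\alpha_4$ --- which are explicit combinations of $c_1,c_2,d_1,d_2,a,b,\|E\|$ --- are each forced to vanish.

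Finally I would solve the resulting system together with (\ref{3.13}). From $a\|E\|^2c_1^2=1$ and $c_1c_2=1$ one gets $c_1=1/(\sqrt a\,\|E\|)$ and $c_2=\sqrt a\,\|E\|$, while the vanishing of the $g(X,E)g(Y,E)$-type coefficients gives $d_1=-2/(\sqrt a\,\|E\|^3)$ and $d_2=-2\sqrt a/\|E\|$; a direct substitution then checks that these values satisfy the second relation in (\ref{3.13}), which yields (\ref{3.14}) and establishes sufficiency. The converse is immediate, since once Lemma \ref{esileila} has separated the independent tensorial terms each step above is an equivalence.
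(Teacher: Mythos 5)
The paper states Theorem \ref{TH00} with no proof at all (it sits immediately after Theorem \ref{Th}, which is attributed to \cite{PTN}), so there is no argument of the authors' to compare yours against step by step. Your skeleton is certainly the natural one: assume (\ref{3.13}) so that $P^2=I$ is settled, then test ${}^{CG}g(PZ,PW)={}^{CG}g(Z,W)$ on horizontal pairs, on the distinguished vertical pairs ${}^V(X\otimes\widetilde E)$, and on the complementary vertical directions, using $G(X\otimes\widetilde E,Y\otimes\widetilde E)=\|E\|^2g(X,Y)$. The $a$-part of the metric then does give $a\|E\|^2c_1^2=1$, $c_2^2=a\|E\|^2$, and the vanishing of the $g(X,E)g(Y,E)$ coefficients, which is where (\ref{3.14}) comes from. (Even here, note that the quadratic condition $2c_1d_1+d_1^2\|E\|^2=0$ has the second root $d_1=0$, and $d_1=d_2=0$ is also compatible with (\ref{3.13}), so the necessity direction needs an extra word to exclude that branch, as well as the sign choices for $c_1,c_2$.)

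The genuine gap is the handling of the Cheeger--Gromoll correction. Writing $\theta(X):=G(t,X\otimes\widetilde E)=\overline{t}^{\,l}_{\,t}E_lX^t$, the term $b\,G(t,\cdot)G(t,\cdot)$ contributes
\[
b\bigl(c_1\theta(X)+d_1g(X,E)\theta(E)\bigr)\bigl(c_1\theta(Y)+d_1g(Y,E)\theta(E)\bigr)
\]
to ${}^{CG}g(P{}^HX,P{}^HY)$, and the leftover $b\,\theta(X)\theta(Y)$ appears on the vertical--vertical side. This is a two-index identity in $X,Y$; it does not have the four-free-index shape (\ref{s1}), so Lemma \ref{esileila}, which is tailored to the curvature identity (\ref{n3}) in the proof of Theorem \ref{n5}, simply does not apply here. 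Worse, if you do separate the independent $t$-quadratic terms correctly (take $X=Y$ with $g(X,E)=0$ and $\theta(X)\neq 0$), you are forced to $b\,c_1^2=0$, which is incompatible with $c_1c_2=1$ unless $b=0$. So under (\ref{3.14}) these $b$-terms do not cancel: the ``decisive step'' you invoke would, if carried out, contradict the statement rather than establish it. As written your argument proves the equivalence only for the Sasaki part of the metric ($b=0$); to close it for the genuine Cheeger--Gromoll type metric you would have to either exhibit a cancellation of the $b$-terms (which I do not believe occurs) or make explicit that the compatibility is being imposed only on the $a$-part of ${}^{CG}g$.
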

Now, we consider vector fields
fields on $T^1_1M$:
\begin{equation}\label{3.15}
\xi_{1}:= \alpha{}^HE,\ \ \ \
\xi_{2}:=\beta{}^V(E\otimes \widetilde{E}),\ \ \ \
\xi_{3}:=\kappa{}^VA,
\end{equation}
and 1-forms
\begin{equation}\label{3.16}
\eta^1=\gamma E_vdx^{v},\ \ \
\eta^2=\lambda E_vE^r\delta t^v_r,\ \ \ \eta^3=\rho\bar{t}^r_v\delta t^v_r,
\end{equation}
on $T^1_1M$, where $\alpha$, $\beta$, $\kappa$, $\gamma$, $\lambda$, $\rho$ are smooth functions of
the energy density on $T^1_1M$ and $\delta t^v_r$ is a dual of $e_{\bar r}$. Using (\ref{3.12}) and
(\ref{3.15}), we get
\begin{equation}\label{3.17}
P(\xi_{1})=\frac{\alpha}{\beta}( c_1+d_1||E||^2)\xi_2,\ \ \
P(\xi_2)=\frac{\beta}{\alpha}(c_2+d_2||E||^2)\xi_1, \ \ \
P(\xi_{3})=\xi_{3},
\end{equation}
and
\begin{equation}\label{3.18}
\eta^1(\xi_1)=\alpha\gamma ||E||^2,\ \ \
\eta^2(\xi_2)=\beta\lambda ||E||^4,\ \ \ \eta^3(\xi_3)=\kappa\rho
\tau,\ \ \eta^a(\xi_b)=0,
\end{equation}
where $a, b=1, 2, 3$ with condition $a\neq b$. We have also by (\ref{3.12}) and (\ref{3.16})
\begin{equation}\label{3.19}
\eta^1\circ P=\frac{\gamma}{\lambda
||E||^2}(c_2+d_2||E||^2)\eta^2,\ \ \ \eta^2\circ P=\frac{\lambda
||E||^2 }{\gamma}(c_1+d_1||E||^2)\eta^1,\ \ \ \eta^3\circ
P=\eta^3.
\end{equation}
Now, we define a tensor field $p$ of type (1,1) on $T^1_1M$ by
\begin{equation}\label{3.20}
p(X)=P(X)-\eta^{1}(X)\xi_{2}-\eta^{2}(X)\xi_{1}-\eta^{3}(X)\xi_{3}.
\end{equation}
This can be written in a more compact from as
$p=P-\eta^{1}\otimes \xi_2-\eta^{2}\otimes \xi_1-\eta^{3}\otimes
\xi_3$. From (\ref{3.20}) the following local expression of
$p$ yields
\begin{equation}\label{3.21}
\left\{
\begin{array}{ccc}
\hspace{-.7cm}p(e_i)=\Big(c_1\delta^v_i+(d_1-\beta\gamma)E_{i}E^v\Big)E_re_{\bar
r},\\
p(E_je_{\bar
j})=\Big(c_2\delta^r_i+(d_2-\alpha\lambda ||E||^2)E_{i}E^r\Big)e_r,\\
\hspace{-2.5cm}p(e_{\bar
j})=\Big(\delta^j_r\delta^v_i-\kappa\rho
\bar{t}^j_it^v_r\Big)e_{\bar{r}}.\\
\end{array}
\right.
\end{equation}
\begin{lem}\label{naderi4}
We have
\begin{equation}\label{3.22}
\left\{
\begin{array}{ccc}
\hspace{-.1cm}p(\xi_1)=\frac{\alpha}{\beta}\Big(c_1+(d_1-\beta\gamma)||E||^2\Big)\xi_2,\\
p(\xi_2)=\frac{\beta}{\alpha}\Big(c_2+(d_2-\alpha\lambda||E||^2)||E||^2
\Big)\xi_1,\\
\hspace{-2cm}p(\xi_3)=(1-\kappa\rho\tau)\xi_3,\\
\end{array}
\right.
\end{equation}
\begin{equation}\label{3.23}
\left\{
\begin{array}{ccc}
\eta^1\circ p=\frac{\gamma}{\lambda||E||^2}\Big(c_2+(d_2-\alpha\lambda||E||^2)||E||^2\Big)\eta^2,\\
\hspace{-.7cm}\eta^2\circ p=\frac{\lambda||E||^2}{\gamma}\Big(c_1+(d_1-\beta\gamma)||E||^2\Big)\eta^1,\\
\hspace{-3cm}\eta^3\circ p=\Big(1-\kappa\rho\tau\Big)\eta^3,\\
\end{array}
\right.
\end{equation}
\begin{align}\label{3.24}
p^2&=I-\Big(\frac{\beta}{\alpha}(c_2+d_2||E||^2)+\frac{\lambda||E||^2}{\gamma}(c_1+d_1||E||^2)-\beta\lambda||E||^4\Big)\eta^1\otimes\xi_1\nonumber\\
&\ \ \ -\Big(\frac{\alpha}{\beta}(c_1+d_1||E||^2)+\frac{\gamma}{\lambda||E||^2}(c_2+d_2||E||^2)-\alpha\gamma||E||^2\Big)\eta^2\otimes\xi_2,\nonumber\\
&\ \ \ +(\kappa\rho\tau-2)\eta^3\otimes\xi_3.
\end{align}
\end{lem}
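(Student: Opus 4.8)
The plan is to verify the three displays (\ref{3.22})--(\ref{3.24}) by direct substitution, handling them in order so that each builds on its predecessor. First I would obtain (\ref{3.22}) by evaluating the defining relation (\ref{3.20}) on each $\xi_i$, namely $p(\xi_i)=P(\xi_i)-\eta^1(\xi_i)\xi_2-\eta^2(\xi_i)\xi_1-\eta^3(\xi_i)\xi_3$. Substituting the values of $P(\xi_i)$ recorded in (\ref{3.17}) and the pairings $\eta^a(\xi_b)$ from (\ref{3.18}), every off-diagonal contribution drops out because $\eta^a(\xi_b)=0$ whenever $a\neq b$, so a single term survives in each case. For example $p(\xi_1)=\frac{\alpha}{\beta}(c_1+d_1||E||^2)\xi_2-\alpha\gamma||E||^2\xi_2$, and writing $\alpha\gamma||E||^2=\frac{\alpha}{\beta}\beta\gamma||E||^2$ lets one factor out $\frac{\alpha}{\beta}$ and recover the stated form $\frac{\alpha}{\beta}(c_1+(d_1-\beta\gamma)||E||^2)\xi_2$; the cases $p(\xi_2)$ and $p(\xi_3)$ are identical.

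Next I would establish (\ref{3.23}) by composing $\eta^a$ with (\ref{3.20}), which gives $\eta^a\circ p=(\eta^a\circ P)-\eta^a(\xi_2)\eta^1-\eta^a(\xi_1)\eta^2-\eta^a(\xi_3)\eta^3$. Inserting $\eta^a\circ P$ from (\ref{3.19}) and the scalars $\eta^a(\xi_b)$ from (\ref{3.18}) and discarding the vanishing cross terms yields the three formulas after the same factorization used for (\ref{3.22}).

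The substantive step is (\ref{3.24}). Here I would work with the compact form $p=P-\eta^1\otimes\xi_2-\eta^2\otimes\xi_1-\eta^3\otimes\xi_3$ and expand $p^2=P\circ p-(\eta^1\circ p)\otimes\xi_2-(\eta^2\circ p)\otimes\xi_1-(\eta^3\circ p)\otimes\xi_3$, using the elementary identity $(\eta^a\otimes\xi_b)\circ p=(\eta^a\circ p)\otimes\xi_b$. To treat $P\circ p$ I would further use $P\circ(\eta^a\otimes\xi_b)=\eta^a\otimes P(\xi_b)$ together with two inputs: that $P$ is an almost product structure, so $P^2=I$ by Theorem \ref{Th} under the conditions (\ref{3.13}), and the explicit values $P(\xi_b)$ from (\ref{3.17}). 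Feeding the already-proved formulas (\ref{3.23}) into the remaining three terms and collecting the coefficients of $\eta^1\otimes\xi_1$, $\eta^2\otimes\xi_2$ and $\eta^3\otimes\xi_3$ then produces (\ref{3.24}).

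The main obstacle is purely the bookkeeping in this last step: one must correctly merge the $\eta^a\otimes\xi_b$ contribution coming from $P\circ(\eta^a\otimes\xi_b)$ with the one coming from $(\eta^a\circ p)\otimes\xi_b$. What makes the two reconcile is that the correction terms $-\beta\gamma||E||^2$ and $-\alpha\lambda||E||^2$ already present inside the brackets of (\ref{3.23}) supply exactly the extra summands $+\beta\lambda||E||^4$ in the $\eta^1\otimes\xi_1$ coefficient and $+\alpha\gamma||E||^2$ in the $\eta^2\otimes\xi_2$ coefficient appearing in (\ref{3.24}), while the two $\eta^3\otimes\xi_3$ pieces $-1$ and $-(1-\kappa\rho\tau)$ add up to $\kappa\rho\tau-2$. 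Notably, beyond $P^2=I$ no further use of the normalizations (\ref{3.14}) is needed, so the identity (\ref{3.24}) holds for the full family of coefficients satisfying (\ref{3.13}).
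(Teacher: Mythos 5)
Your proposal is correct and follows essentially the same route as the paper: both expand $p^2$ directly from the definition (\ref{3.20}), substitute the recorded values (\ref{3.17})--(\ref{3.19}), invoke $P^2=I$ from Theorem \ref{Th}, and collect the coefficients of $\eta^a\otimes\xi_b$. The only cosmetic difference is that you first establish (\ref{3.22}) and (\ref{3.23}) explicitly and feed them into the expansion, whereas the paper treats those two displays as routine and writes out only the computation for (\ref{3.24}) in one pass.
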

\begin{proof}
We only prove (\ref{3.24}). Using (\ref{3.17}), (\ref{3.18}) and (\ref{3.19}) we have
\begin{align*}
p^2(X)&=p(p(X))=P[P(X)-\eta^1(X)\xi_2-\eta^2(X)\xi_1-\eta^3(X)\xi_3]\\
&\ \ -\eta^1[P(X)-\eta^2(X)\xi_1]\xi_2-\eta^2[P(X)-\eta^1(X)\xi_2]\xi_1\\
&\ \ -\eta^3[P(X)-\eta^3(X)\xi_3]\xi_1=X-\frac{\beta}{\alpha}(c_2+d_2||E||^2)\eta^1(X)\xi_1\\
&\ \ -\frac{\alpha}{\beta}(c_1+d_1||E||^2)\eta^2(X)\xi_2-\frac{\gamma}{\lambda||E||^2}(c_2+d_2||E||^2)\eta^2(X)\xi_2\\
&\ \ +||E||^2\alpha\gamma\eta^2(X)\xi_2-2\eta^3(X)\xi_3-\frac{\lambda||E||^2}{\gamma}(c_1+d_1|E||^2)\eta^1(X)\xi_1\\
&\ \ +||E||^4\beta\lambda\eta^1(X)\xi_1+\kappa\rho\tau\eta^3(X)\xi_3.
\end{align*}
The above equation gives us (\ref{3.24}).
\end{proof}
\begin{lem}\label{binam}
Let $P$ satisfy Theorem \ref{Th}. If
\begin{equation}\label{3.25}
\alpha\gamma||E||^2=1,\ \ \ \beta\lambda||E||^4=1,\ \
\kappa\rho\tau=1,\ \ \ \  \lambda=\frac{\gamma}{||E||^2}(c_2+
d_2||E||^2),
\end{equation}
then $p^3-p=0$ and $p$ has the rank $n+n^2-3$ (or corank 3).
\end{lem}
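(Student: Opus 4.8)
The plan is to reduce everything to the two identities already recorded in Lemma~\ref{naderi4}: the formula (\ref{3.24}) for $p^2$ and the formulas (\ref{3.22}) for $p(\xi_a)$. I would show that the four hypotheses (\ref{3.25}), combined with the two constraints of Theorem~\ref{Th}, collapse every bracketed coefficient appearing there to $\pm 1$ or $0$.

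First I would unpack the normalization. Writing $u:=c_1+d_1||E||^2$ and $w:=c_2+d_2||E||^2$, relation (\ref{3.13}) of Theorem~\ref{Th} gives $c_1c_2=1$ and $uw=1$. The first two relations of (\ref{3.25}) give $\gamma||E||^2=\alpha^{-1}$ and $\lambda||E||^4=\beta^{-1}$, whence $\beta\gamma||E||^2=\beta/\alpha$ and $\alpha\lambda||E||^4=\alpha/\beta$; substituting these into the fourth relation of (\ref{3.25}) yields the single consequence
\[
\frac{\beta}{\alpha}=u=\frac{1}{w}=\frac{1}{\,c_2+d_2||E||^2\,},\qquad \frac{\lambda||E||^2}{\gamma}=w .
\]
These, together with $\alpha\gamma||E||^2=\beta\lambda||E||^4=\kappa\rho\tau=1$, are the only facts I need. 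Feeding them into (\ref{3.24}), the coefficient of $\eta^1\otimes\xi_1$ becomes $uw+wu-1=1$, the coefficient of $\eta^2\otimes\xi_2$ becomes $wu+uw-1=1$, and the last coefficient becomes $\kappa\rho\tau-2=-1$, so that
\[
p^2=I-\eta^1\otimes\xi_1-\eta^2\otimes\xi_2-\eta^3\otimes\xi_3 .
\]
In parallel, substituting (\ref{3.25}) into (\ref{3.22}) and using $\beta\gamma||E||^2=\beta/\alpha=u$, $\alpha\lambda||E||^4=\alpha/\beta=w$ and $uw=1$ gives $p(\xi_1)=p(\xi_2)=p(\xi_3)=0$. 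Hence $p^3=p\cdot p^2=p-\eta^1\otimes p(\xi_1)-\eta^2\otimes p(\xi_2)-\eta^3\otimes p(\xi_3)=p$, that is, $p^3-p=0$.

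For the rank I would use biorthogonality. By (\ref{3.18}), under (\ref{3.25}) one has $\eta^a(\xi_b)=\delta^a_b$, so $Q:=\eta^1\otimes\xi_1+\eta^2\otimes\xi_2+\eta^3\otimes\xi_3$ satisfies $Q^2=Q$, is idempotent with image $\mathrm{span}\{\xi_1,\xi_2,\xi_3\}$, and has rank $3$; in particular $\xi_1,\xi_2,\xi_3$ are linearly independent. Consequently $p^2=I-Q$ is idempotent of rank $(n+n^2)-3$, which forces $\mathrm{rank}\,p\ge\mathrm{rank}\,p^2=n+n^2-3$. On the other hand the three independent vectors $\xi_1,\xi_2,\xi_3$ lie in $\ker p$, so $\dim\ker p\ge 3$ and $\mathrm{rank}\,p\le n+n^2-3$. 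Combining the two inequalities gives $\mathrm{rank}\,p=n+n^2-3$, i.e. corank $3$.

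The step I expect to be the main obstacle is the coefficient bookkeeping: one must keep the two independent constraints of Theorem~\ref{Th} and the four constraints of (\ref{3.25}) straight so that every bracket in (\ref{3.24}) and (\ref{3.22}) reduces cleanly. Once the single identity $\beta/\alpha=u=(c_2+d_2||E||^2)^{-1}$ is extracted, the remaining simplifications are mechanical, and the rank assertion follows formally from the clean shape $p^2=I-Q$ together with $p^3=p$.
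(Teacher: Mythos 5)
Your argument is correct, and its first half is essentially the computation the paper leaves implicit: setting $u=c_1+d_1||E||^2$, $w=c_2+d_2||E||^2$ and extracting $\beta/\alpha=u=1/w$, $\lambda||E||^2/\gamma=w$ from (\ref{3.25}) and (\ref{3.13}) is exactly what is needed to pass from Lemma \ref{naderi4} to the identities (\ref{3.26}), from which $p^3=p$ follows in both your write-up and the paper's. Where you genuinely diverge is the corank statement. The paper identifies the kernel explicitly: from $p(X)=0$ it writes $P(X)=\eta^1(X)\xi_2+\eta^2(X)\xi_1+\eta^3(X)\xi_3$, applies $P$ once more, and uses $P^2=I$ together with (\ref{3.17}) to solve for $X$ as a combination of the $\xi_k$, concluding $\ker p=\mathrm{span}\{\xi_1,\xi_2,\xi_3\}$. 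You instead squeeze the rank: $Q=\sum_a\eta^a\otimes\xi_a$ is a rank-$3$ idempotent because $\eta^a(\xi_b)=\delta^a_b$, so $p^2=I-Q$ has rank $n+n^2-3$, giving $\mathrm{rank}\,p\ge n+n^2-3$, while the three independent $\xi_k$ in $\ker p$ give the reverse inequality. Your route is purely formal --- it never re-invokes the almost product property $P^2=I$ beyond what is already encoded in (\ref{3.26}), and would apply verbatim to any framed $f(3,-1)$-structure --- whereas the paper's route costs a bit more computation but delivers the explicit description of $\ker p$ (which, of course, also drops out of your dimension count combined with the inclusion $\mathrm{span}\{\xi_1,\xi_2,\xi_3\}\subseteq\ker p$). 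Both arguments are complete.
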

\begin{proof}
If (\ref{3.25}) holds, then from the above lemma we obtain
\begin{equation}\label{3.26}
p^2=I-\eta^1\otimes\xi_1-\eta^2\otimes\xi_2-\eta^3\otimes\xi_3,\ \
\ p(\xi_k)=0,\ \ \ \eta^k(\xi_l)=\delta^k_l,\ \ \ \eta^k\circ
p=0,
\end{equation}
where $k, l=1,2,3$. Therefore we have $p^3=p$. In order to prove the second part of the lemma, it is sufficient to show that $\ker p=span\{\xi_1, \xi_2, \xi_3\}$.
 From the second relation in (\ref{3.26}) we notice that $span\{\xi_1, \xi_2, \xi_3\}\subset\ker p$. Now we let
$X=X^re_r+X^vE_re_{\bar r}+X^{\bar{r}}e_{\bar{r}}\in\ker p$. Then
$p(X)=0$ implies that
\[
P(X)-\eta^{1}(X)\xi_{2}-\eta^{2}(X)\xi_{1}-\eta^3\otimes\xi_3=0.
\]
Thus
\[
P^2(X)=\eta^1(X)P(\xi_2)+\eta^2(X)P(\xi_1)+\eta^3(X)P(\xi_3).
\]
Since $P^2=I$, then by using (\ref{3.17}) we get
\[
X=\frac{\beta}{\alpha}(c_2+d_2||E||^2)\eta^1(X)\xi_1+\frac{\alpha}{\beta}(c_1+d_1||E||^2)\eta^2(X)\xi_2+\eta^3(X)\xi_3,
\]
that is $X\in span\{\xi_1, \xi_2,\xi_3\}$, i.e., $\ker p\subseteq
span\{\xi_1,\xi_2,\xi_3\}$.
\end{proof}
\begin{thm}\label{t1}
Let  $P$ be the almost product structure characterized in Theorem
\ref{Th} and $\xi_k$, $\eta^k$, $k=1, 2, 3$ and $p$ be defined by
(\ref{3.15}), (\ref{3.16}) and (\ref{3.20}),
respectively. Then the triple $(p, (\xi_k),(\eta^k))$ provides a
framed $f(3, -1)$- structure if and only if (\ref{3.25}) holds.
\end{thm}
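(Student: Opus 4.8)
The plan is to reduce the statement to matching the defining axioms of a framed $f(3,-1)$-structure against the coordinate formulas already assembled in Lemmas \ref{naderi4} and \ref{binam}. Recall that such a structure on $T^1_1M$ with three complementary frame fields consists of the tensor $p$, the vector fields $\xi_k$ and the $1$-forms $\eta^k$ ($k=1,2,3$) subject to
\[
p^3-p=0,\quad p^2=I-\sum_{k=1}^3\eta^k\otimes\xi_k,\quad p(\xi_k)=0,\quad \eta^k\circ p=0,\quad \eta^k(\xi_l)=\delta^k_l.
\]
These are exactly the relations gathered in (\ref{3.26}), so the proof splits into the two implications between this package of axioms and the scalar conditions (\ref{3.25}).

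The sufficiency (``if'') direction is immediate from Lemma \ref{binam}: assuming (\ref{3.25}) together with (\ref{3.13}) (which holds because $P$ is the almost product structure of Theorem \ref{Th}), that lemma establishes (\ref{3.26}), which is precisely the complete list of framed $f(3,-1)$ axioms, in particular $p^3=p$. Hence $(p,(\xi_k),(\eta^k))$ is a framed $f(3,-1)$-structure with no further computation.

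For the necessity (``only if'') direction I would assume the axioms hold and read off their consequences from (\ref{3.18}), (\ref{3.22}) and (\ref{3.24}). First, the diagonal duality relations $\eta^k(\xi_k)=1$, compared with (\ref{3.18}), force $\alpha\gamma||E||^2=1$, $\beta\lambda||E||^4=1$ and $\kappa\rho\tau=1$, the first three equations of (\ref{3.25}); these also guarantee $\alpha,\beta,\gamma,\lambda,\kappa,\rho\neq0$, so the divisions below are legitimate. Next, $p(\xi_2)=0$ in (\ref{3.22}) gives $c_2+(d_2-\alpha\lambda||E||^2)||E||^2=0$, and substituting $\alpha=1/(\gamma||E||^2)$ turns this into $\lambda=\frac{\gamma}{||E||^2}(c_2+d_2||E||^2)$, the fourth equation of (\ref{3.25}). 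Thus all four conditions have been extracted.

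The step needing the most care, and the one I expect to be the main obstacle, is checking that the remaining axioms do not impose a further independent constraint clashing with (\ref{3.25}). Concretely, $p(\xi_1)=0$ requires $\beta\gamma||E||^2=c_1+d_1||E||^2$, whereas combining $\beta\lambda||E||^4=1$ with the fourth relation yields $\beta\gamma||E||^2=1/(c_2+d_2||E||^2)$; these two values coincide precisely because $(c_1+d_1||E||^2)(c_2+d_2||E||^2)=1$ by (\ref{3.13}). A parallel inspection of (\ref{3.24}) shows the $\eta^3$-coefficient $\kappa\rho\tau-2$ collapses to $-1$ and the $\eta^1,\eta^2$-coefficients reduce to $1$ under (\ref{3.25}) and (\ref{3.13}), so $p^2=I-\sum_{k=1}^3\eta^k\otimes\xi_k$ holds automatically, while the relations $\eta^k\circ p=0$ from (\ref{3.23}) merely repeat the equations already used. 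Hence no fifth condition appears, the system is consistent, and (\ref{3.25}) is exactly necessary and sufficient.
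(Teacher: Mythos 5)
Your proposal is correct and follows essentially the same route as the paper: necessity is read off from $\eta^k(\xi_l)=\delta^k_l$ via (\ref{3.18}) and from $p(\xi_2)=0$ via the second relation of (\ref{3.22}), while sufficiency is exactly Lemmas \ref{naderi4} and \ref{binam}. Your extra verification that $p(\xi_1)=0$ and the $p^2$ identity impose no independent constraint (thanks to (\ref{3.13})) is left implicit in the paper but is a welcome addition rather than a divergence.
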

\begin{proof}
Let $(p, (\xi_k),(\eta^k))$ be a framed $f(3,-1)$- structure on
$T^1_1M$. Then by the definition of a framed $f(3,-1)$- structure,
we have $\eta^k(\xi_l)=\delta^k_l$, where $k,l=1,2,3$. Thus
 (\ref{3.18}) gives us
\begin{equation}\label{hi}
\alpha\gamma||E||^2=\beta\lambda||E||^4=\kappa\rho\tau=1.
\end{equation}
We have also $p(\xi_3)=0$. The above equation and the second relation in (\ref{3.22}) yield
$\lambda=\frac{\gamma}{||E||^2}(c_2+ d_2||E||^2)$. By using  lemmas \ref{naderi4} and \ref{binam}, the converse of the theorem is proved.
\end{proof}
\begin{lem}\label{Dr}
Let $({}^{CG}g, P)$ satisfy Theorem \ref{TH00}. Then the Riemannian metric ${}^{CG}g$ satisfies
\begin{eqnarray*}
^{CG}g(pX, pY)\!\!\!\!&=&\!\!\!\!{}^{CG}g(X,
Y)-a\beta(\frac{2(c_1+d_1||E||^2)}{\gamma}-\beta||E||^2
)||E||^2\eta^1(X)\eta^1(Y)\\
\!\!\!\!&&\!\!\!\!-\alpha(\frac{2(c_2+d_2||E||^2)}{\lambda||E||^2}-\alpha||E||^2)\eta^2(X)\eta^2(Y)\\
\!\!\!\!&&\!\!\!\!-\kappa(a+b\tau)(\frac{2}{\rho}-\kappa\tau)\eta^3(X)\eta^3(Y),
\end{eqnarray*}
for each $X,Y\in\Im^1_0(T^1_1M)$.
\end{lem}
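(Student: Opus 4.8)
The plan is to expand ${}^{CG}g(pX,pY)$ by bilinearity directly from the definition (\ref{3.20}), $p=P-\eta^1\otimes\xi_2-\eta^2\otimes\xi_1-\eta^3\otimes\xi_3$. Writing $W_X:=\eta^1(X)\xi_2+\eta^2(X)\xi_1+\eta^3(X)\xi_3$ so that $pX=PX-W_X$, I get
\[
{}^{CG}g(pX,pY)={}^{CG}g(PX,PY)-{}^{CG}g(PX,W_Y)-{}^{CG}g(W_X,PY)+{}^{CG}g(W_X,W_Y).
\]
Since by Theorem \ref{TH00} the pair $({}^{CG}g,P)$ is a Riemannian almost product structure, I have both $P^2=I$ and ${}^{CG}g(PX,PY)={}^{CG}g(X,Y)$; combining these gives that $P$ is ${}^{CG}g$-self-adjoint, i.e. ${}^{CG}g(PX,\xi_k)={}^{CG}g(X,P\xi_k)$. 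The leading term thus already supplies ${}^{CG}g(X,Y)$, and the whole task reduces to evaluating the two cross terms and the quadratic term.

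The key preliminary step is to establish the three ``musical'' identities
\[
{}^{CG}g(X,\xi_1)=\frac{\alpha}{\gamma}\eta^1(X),\quad {}^{CG}g(X,\xi_2)=\frac{a\beta}{\lambda}\eta^2(X),\quad {}^{CG}g(X,\xi_3)=\frac{\kappa(a+b\tau)}{\rho}\eta^3(X),
\]
valid for every $X\in\Im^1_0(T^1_1M)$. I would obtain these by inserting the local expressions (\ref{3.15}) and (\ref{3.16}) into the matrix of ${}^{CG}g$ and its inverse in the adapted frame, using the ${}^{CG}g$-orthogonality of horizontal and vertical lifts together with the orthogonality relations $\eta^a(\xi_b)=0$ $(a\neq b)$ from (\ref{3.18}); the three coefficients are then pinned down by setting $X=\xi_k$ and comparing with the norms $\eta^k(\xi_k)$ recorded in (\ref{3.18}). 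Next, feeding the images $P\xi_k$ from (\ref{3.17}) into self-adjointness yields ${}^{CG}g(PX,\xi_1)=\tfrac{a\alpha}{\lambda}(c_1+d_1||E||^2)\eta^2(X)$, ${}^{CG}g(PX,\xi_2)=\tfrac{\beta}{\gamma}(c_2+d_2||E||^2)\eta^1(X)$ and ${}^{CG}g(PX,\xi_3)={}^{CG}g(X,\xi_3)$, while for the quadratic term I record ${}^{CG}g(\xi_1,\xi_1)=\alpha^2||E||^2$, ${}^{CG}g(\xi_2,\xi_2)=a\beta^2||E||^4$, ${}^{CG}g(\xi_3,\xi_3)=\kappa^2\tau(a+b\tau)$, all mixed pairings vanishing by (\ref{3.18}).

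Substituting all of this into the four-term expansion and collecting, the off-diagonal contributions $\eta^a(X)\eta^b(Y)$ with $a\neq b$ cancel, and I am left with ${}^{CG}g(X,Y)$ together with coefficients of $\eta^1(X)\eta^1(Y)$, $\eta^2(X)\eta^2(Y)$ and $\eta^3(X)\eta^3(Y)$ equal respectively to $-\tfrac{2\beta}{\gamma}(c_2+d_2||E||^2)+a\beta^2||E||^4$, $-\tfrac{2a\alpha}{\lambda}(c_1+d_1||E||^2)+\alpha^2||E||^2$ and $-\tfrac{2\kappa(a+b\tau)}{\rho}+\kappa^2\tau(a+b\tau)$. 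The third coefficient is already the stated one after factoring $-\kappa(a+b\tau)$. To bring the first two into the stated form I invoke the algebraic identity forced by the coefficients (\ref{3.14}), namely $c_2+d_2||E||^2=a||E||^2(c_1+d_1||E||^2)$ (each side equals $-\sqrt{a}\,||E||$), which lets me replace $c_2+d_2||E||^2$ by $a||E||^2(c_1+d_1||E||^2)$ in the $\eta^1$-coefficient and $a(c_1+d_1||E||^2)$ by $(c_2+d_2||E||^2)/||E||^2$ in the $\eta^2$-coefficient; factoring out $-a\beta||E||^2$ and $-\alpha$ then produces exactly the three expressions in the statement.

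The main obstacle is guaranteeing that no mixed $\eta^a\otimes\eta^b$ $(a\neq b)$ terms survive. This hinges entirely on the orthogonality package (\ref{3.18}), and most delicately on ${}^{CG}g(X,\xi_2)$ being a \emph{pure} multiple of $\eta^2$: the $b$-part of the Cheeger--Gromoll metric a priori contributes a term proportional to $\eta^3$ through $G(t,E\otimes\widetilde{E})$, and one must check that it drops out (equivalently, that $\eta^3(\xi_2)=0$), so that the self-adjoint reduction of ${}^{CG}g(PX,\xi_1)$ does not leak an $\eta^2\otimes\eta^3$ contribution. The only other point requiring care is the bookkeeping in applying the Theorem \ref{TH00} identity to reconcile the factors $(c_1+d_1||E||^2)$ and $(c_2+d_2||E||^2)$ appearing on the two sides; once that identity is in hand, the remaining simplification is purely mechanical.
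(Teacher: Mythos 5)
Your proposal is correct and follows essentially the same route as the paper's proof: both expand ${}^{CG}g(pX,pY)$ bilinearly from (\ref{3.20}), reduce the cross terms using the identities ${}^{CG}g(X,\xi_1)=\frac{\alpha}{\gamma}\eta^1(X)$, ${}^{CG}g(X,\xi_2)=\frac{a\beta}{\lambda}\eta^2(X)$, ${}^{CG}g(X,\xi_3)=\frac{\kappa(a+b\tau)}{\rho}\eta^3(X)$ together with the norms of the $\xi_k$ and the orthogonality package (\ref{3.18}), and finish with ${}^{CG}g(PX,PY)={}^{CG}g(X,Y)$. The only cosmetic difference is that you evaluate ${}^{CG}g(PX,\xi_k)$ via the self-adjointness of $P$ and (\ref{3.17}) and then convert with the identity $c_2+d_2||E||^2=a||E||^2(c_1+d_1||E||^2)$, whereas the paper uses the formulas (\ref{3.19}) for $\eta^k\circ P$ and lands on the stated coefficients directly.
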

\begin{proof}
Obviously, we have ${}^{CG}g(\xi_1, \xi_2)=0$. Using (\ref{3.15}), we deduce
\[
{}^{CG}g(\xi_1, \xi_1)=\alpha^2||E||^2,\ {}^{CG}g(\xi_2,
\xi_2)=a\beta^2||E||^4,\ {}^{CG}g(\xi_3,
\xi_3)=\kappa^2(a+b\tau)\tau.
\]
We have also
\[
{}^{CG}g(X,\xi_1)=\frac{\alpha}{\gamma}\eta^1(X),\
{}^{CG}g(X,\xi_2)=\frac{a\beta}{\lambda}\eta^2(X), \
{}^{CG}g(X,\xi_3)=\frac{\kappa}{\rho}(a+b\tau)\eta^3(X).
\]
Using (\ref{3.19}) and the above equations we deduce
\begin{eqnarray*}
{}^{CG}g(pX, pY)\!\!\!\!&=&\!\!\!\!{}^{CG}g(PX, PY)-\frac{2a\beta}{\gamma}(c_1+d_1||E||^2)||E||^2\eta^1(X)\eta^1(Y)\\
\!\!\!\!&&\!\!\!\!+\alpha^2||E||^2\eta^2(X)\eta^2(Y)+a\beta^2||E||^4\eta^1(X)\eta^1(Y)\\
\!\!\!\!&&\!\!\!\!-\frac{2\alpha}{\lambda||E||^2}(c_2+d_2||E||^2)\eta^2(X)\eta^2(Y)\\
\!\!\!\!&&\!\!\!\!-\kappa(a+b\tau)(\frac{2}{\rho}-\kappa\tau)\eta^3(X)\eta^3(Y).
\end{eqnarray*}
But ${}^{CG}g(PX, PY)={}^{CG}g(X, Y)$, since $({}^{CG}g, P)$ is a Riemannian almost product structure. Thus the lemma is proved.
\end{proof}
\begin{thm}
If $({}^{CG}g, P)$ is the Riemannian almost product structure characterized in Theorem \ref{TH00}, and $\xi_k$, $\eta^k$, $k=1, 2, 3$, $p$ are defined by (\ref{3.15}), (\ref{3.16}) and (\ref{3.20}), respectively, then $({}^{CG}g, p, (\xi_k),(\eta^k))$ provides a metrical framed $f(3, -1)$- structure if and only if (\ref{3.25}) and
\begin{equation}\label{3.28}
\gamma=\alpha,\ \ \ \lambda=a\beta,\ \  \ \
\rho=\kappa(a+b\tau),
\end{equation}
hold good.
\end{thm}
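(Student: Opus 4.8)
The plan is to unwind the definition: a metrical framed $f(3,-1)$-structure is a framed $f(3,-1)$-structure $(p,(\xi_k),(\eta^k))$ whose metric satisfies the compatibility identity
\[
{}^{CG}g(pX,pY)={}^{CG}g(X,Y)-\sum_{k=1}^{3}\eta^k(X)\eta^k(Y),\qquad X,Y\in\Im^1_0(T^1_1M).
\]
Since the object in question is, in particular, a framed $f(3,-1)$-structure, Theorem \ref{t1} already forces (\ref{3.25}) in both directions of the desired equivalence. I would therefore assume (\ref{3.25}) once and for all and reduce the theorem to the claim that, under (\ref{3.25}), the displayed compatibility identity is equivalent to (\ref{3.28}).

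Next I would invoke Lemma \ref{Dr}. Because $({}^{CG}g,P)$ satisfies Theorem \ref{TH00}, that lemma writes ${}^{CG}g(pX,pY)$ as ${}^{CG}g(X,Y)$ minus three terms $C_1\,\eta^1(X)\eta^1(Y)$, $C_2\,\eta^2(X)\eta^2(Y)$ and $C_3\,\eta^3(X)\eta^3(Y)$, where $C_1,C_2,C_3$ denote the bracketed coefficients appearing there. Under (\ref{3.25}) the forms $\eta^1,\eta^2,\eta^3$ are linearly independent, since $\eta^k(\xi_l)=\delta^k_l$ by (\ref{3.26}); comparing the coefficients of the independent symmetric tensors $\eta^k\otimes\eta^k$ then shows that the compatibility identity holds if and only if $C_1=C_2=C_3=1$. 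The rest is computation: simplify each $C_k$ using (\ref{3.25}) together with the almost-product relation (\ref{3.13}).

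For $C_3=\kappa(a+b\tau)\bigl(\frac{2}{\rho}-\kappa\tau\bigr)$, the normalization $\kappa\rho\tau=1$ gives $\frac{1}{\rho}=\kappa\tau$, whence $C_3=\kappa^2\tau(a+b\tau)$ and $C_3=1$ is exactly $\rho=\kappa(a+b\tau)$. For $C_2$, the last equation of (\ref{3.25}) reads $\lambda||E||^2=\gamma(c_2+d_2||E||^2)$, so the quotient $\frac{2(c_2+d_2||E||^2)}{\lambda||E||^2}$ collapses to $\frac{2}{\gamma}$; combined with $\alpha\gamma||E||^2=1$ this gives $C_2=\alpha^2||E||^2$, and $C_2=1$ is exactly $\gamma=\alpha$. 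For $C_1$ I would first use (\ref{3.13}) in the form $(c_1+d_1||E||^2)(c_2+d_2||E||^2)=1$ together with $\lambda||E||^2=\gamma(c_2+d_2||E||^2)$ to replace $c_1+d_1||E||^2$ by $\frac{\gamma}{\lambda||E||^2}$; the bracket then simplifies, and $\beta\lambda||E||^4=1$ yields $C_1=a\beta^2||E||^4$, so that $C_1=1$ is exactly $\lambda=a\beta$. Assembling the three equivalences recovers precisely (\ref{3.28}), completing both directions.

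The only genuine obstacle is the $C_1$ step. Unlike $C_2$ and $C_3$, which close using the normalizations in (\ref{3.25}) alone, $C_1$ cannot be reduced without feeding in the product identity (\ref{3.13}): one must trade the vertical coefficient $c_1+d_1||E||^2$ for $\frac{\gamma}{\lambda||E||^2}$ before the $\beta$-normalization applies. Once this substitution is in place everything else is routine bookkeeping.
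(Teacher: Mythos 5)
Your proposal is correct and follows essentially the same route as the paper: the framed part is delegated to Theorem \ref{t1} (forcing (\ref{3.25}) in both directions), and the metricity condition is reduced via Lemma \ref{Dr} to the vanishing of the three coefficients, which the paper dismisses as "easy to see" and you verify explicitly (your evaluations $C_1=a\beta^2\|E\|^4$, $C_2=\alpha^2\|E\|^2$, $C_3=\kappa^2\tau(a+b\tau)$, including the use of (\ref{3.13}) to eliminate $c_1+d_1\|E\|^2$, all check out).
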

\begin{proof}
Using Lemma \ref{Dr}, it is easy to see that the metricity  condition
\[
{}^{CG}g(pX, pY)={}^{CG}g(X,
Y)-\eta^1(X)\eta^1(Y)-\eta^2(X)\eta^2(Y)-\eta^3(X)\eta^3(Y),
\]
of the framed $f(3, -1)$ structure characterized by (\ref{3.25}) is satisfied if and only if (\ref{3.28}) hold good. Thus the proof is complete.
\end{proof}
\section{On $(1,1)$-tensor sphere bundle}
Let $r$ be a positive number. Then the $(1,1)$ tensor sphere
bundle of radius $r$ over a Riemannian $(M,g)$ is the hypersurface
$T_{1r}^1(M)=\{(x,t)\in T^1_1M|G_x(t,t)=r^2\}$. It is easy to
check that the tensor field
\[
N=t^i_je_{\overline{j}},
\]
is a tensor field on $TM^1_1$ which is normal to $T_{1r}^1M$.

In general for any tensor field $A\in\Im^1_1(M)$, the vertical
lift $A^V$ is not tangent to $T_{1r}^1M$ at point $(x,t)$.
We define the tangential lift $A^T$ of a tensor field $A$ to $(x,t)\in T_{1r}^1M$ by   \\
\begin{equation}\label{4.29}
A^T_{(x,t)}=A^V_{(x,t)}-\frac{1}{r^2}G_{x}(A,t)N^V_{(x,t)}.
\end{equation}
Now, the tangent space $TT_{1r}^1M$ is spanned by $e_j$ and $
e_{\bar j}^T=\partial_{\overline{j}}-\frac{1}{r^2}\overline{t}^j_it^v_r\partial_{\overline{r}}
$. We notice there is the relation
$t^i_je_{\bar j}^T=0$, hence in any point of $T_{1r}^1M$ the vectors
$e_{\bar{j}}^T$; $\bar{j}=n+1,\ldots,n+n^2$,   span an $(n^2-1)$-
dimensional subspace of $TT_{1r}^1(M)$. Using (\ref{4.29}) and the
computation starting with the formula (\ref{3.7}), we see that the
Riemannian metric $\widetilde{g}$ on $T_{1}^1M$, induced from
$^{CG}g$, is completely determined by the identities
\begin{eqnarray}
\widetilde{g}(^TA,^TB)\!\!\!\!&=&\!\!\!\!a{}^V(G(A,B)-\frac{1}{r^2} G(t,A)G(t,B)),\nonumber\\
\widetilde{g}(^TA,^HY)\!\!\!\!&=&\!\!\!\!0,\label{g1}\\
\widetilde{g}(^HX,^HY)\!\!\!\!&=&\!\!\!\!{}^V(g(X,Y)),\nonumber
\end{eqnarray}
for all $X, Y\in\Im^1_0(M)$ and $A, B\in\Im^1_1(M)$, where $a$ is
constant that
satisfy $a>0$.\\
The bracket operation of tangential and horizontal vector fields
is given by the formulas
\[[e_{\bar{l}}^T,e_{\bar{j}}^T]=\frac{1}{r^2}(\overline{t}^l_t\delta^v_i\delta^j_r-\overline{t}^j_i\delta^v_t\delta^l_r)e_{\bar{r}}^T,
\]
\[[e_l,e_{\bar{j}}^T]=(\Gamma^v_{li}\delta^j_r-\Gamma^j_{lr}\delta^v_i)e_{\bar{r}}^T,\hskip
.6cm
\]
\[[e_l,e_j]=(R_{ljr}^{\ \ \ s}t^v_s-R_{ljs}^{\ \ \ v
}t^s_r)e^T_{\bar{r}}.
\]
\begin{prop}\label{2}
The Levi-Civita connection $\widetilde{\nabla}$, associated the
Riemannian metric $\widetilde{g}$ on the tensor bundle
$T_{1r}^1M$ has the form
\begin{eqnarray*}
\widetilde{\nabla}_{e_l}^{e_j}\!\!\!\!&=&\!\!\!\!\Gamma^r_{lj}e_r+\frac{1}{2}(R_{ljr}^{\
\ \ s}t^v_s-R_{ljs}^{\ \ \ v}t^s_r)e_{\bar{r}}^T,\\
\widetilde{\nabla}_{e_{\bar{l}}^T}^{e_j}\!\!\!\!&=&\!\!\!\!\frac{a}{2}(g_{ta}R_{\
\ j }^{sl\ r}t^a_s-g^{lb}R_{tsj}^{\ \ \ r}t^s_b)e_r
,\\
\widetilde{\nabla}_{e_l}^{e_{\bar{j}}^T}\!\!\!\!&=&\!\!\!\!\frac{a}{2}(g_{ia}R_{\
\ l }^{sj\ r}t^a_s-g^{jb}R_{isl}^{\ \ \ r}t^s_b)e_r
+(\Gamma^v_{li}\delta^j_r-\Gamma^j_{lr}\delta^v_i)e_{\bar{r}}^T,\\
\widetilde{\nabla}_{e_{\bar{l}}^T}^{e_{\bar{j}}^T}
\!\!\!\!&=&\!\!\!\!-\frac{1}{r^2}\overline{t}^j_i\delta^l_r\delta^v_te_{\bar{r}}^T.
\end{eqnarray*}
\end{prop}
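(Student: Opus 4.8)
The plan is to apply the Koszul formula
\[
2\widetilde{g}(\widetilde{\nabla}_X Y, Z) = X\widetilde{g}(Y,Z) + Y\widetilde{g}(X,Z) - Z\widetilde{g}(X,Y) + \widetilde{g}([X,Y],Z) - \widetilde{g}([X,Z],Y) - \widetilde{g}([Y,Z],X)
\]
to the induced metric (\ref{g1}) in the adapted frame $\{e_l, e_{\bar j}^T\}$ that spans $TT_{1r}^1M$. The computation is organized by the block structure of $\widetilde{g}$: the horizontal block $\widetilde{g}(e_l,e_j)={}^V(g_{lj})$ is a pure base function, the mixed block $\widetilde{g}(e_l,e_{\bar j}^T)$ vanishes, and the tangential block $\widetilde{g}(e_{\bar l}^T,e_{\bar j}^T)=a\,{}^V\!\big(G-\tfrac1{r^2}G(t,\cdot)G(t,\cdot)\big)$ is $a$ times the round metric on the radius-$r$ fiber sphere. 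Because of this block form the inverse metric splits as well, with horizontal part $g^{lj}$ and tangential part $\tfrac1a$ times the fiber-sphere inverse, so that once the right-hand side of Koszul is expanded against each $Z\in\{e_r,e_{\bar r}^T\}$ one can read off $\widetilde{\nabla}_X Y$ directly; the independence result of Lemma \ref{esileila} is the tool that guarantees the matching of tensorial coefficients is unambiguous. Note that on $T_{1r}^1M$ the radius $\tau=r^2$ is constant and the induced metric involves only the constant $a$, so no derivatives $a',b'$ ever appear.

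For $\widetilde{\nabla}_{e_l}e_j$, testing against $Z=e_r$ uses only the horizontal block, and since $\widetilde{g}(e_l,e_j)=g_{lj}(x)$ depends on the base alone, the three derivative terms reproduce exactly the base Christoffel symbols and give $\Gamma^r_{lj}e_r$; testing against $Z=e_{\bar r}^T$ leaves only the curvature bracket $[e_l,e_j]=(R_{ljr}^{\ \ \ s}t^v_s-R_{ljs}^{\ \ \ v}t^s_r)e_{\bar r}^T$, producing the $\tfrac12(\cdots)e_{\bar r}^T$ term. For the two mixed derivatives $\widetilde{\nabla}_{e_{\bar l}^T}e_j$ and $\widetilde{\nabla}_{e_l}e_{\bar j}^T$, the horizontal component arises by pairing the curvature bracket $[e_l,e_r]$ with a tangential field through the tangential block (which carries the factors $a$, $g_{ta}$, $g^{lb}$), yielding the curvature expressions with coefficient $\tfrac a2$; the tangential component of $\widetilde{\nabla}_{e_l}e_{\bar j}^T$ comes from the bracket $[e_l,e_{\bar j}^T]=(\Gamma^v_{li}\delta^j_r-\Gamma^j_{lr}\delta^v_i)e_{\bar r}^T$, giving the stated connection coefficients.

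The delicate case, which I expect to be the main obstacle, is the purely tangential derivative $\widetilde{\nabla}_{e_{\bar l}^T}e_{\bar j}^T$. Here every surviving Koszul term acts on the tangential block in fiber directions, so the sphere constraint $G(t,t)=r^2$ and the $\tfrac1{r^2}$ correction of the tangential lift (\ref{4.29}) enter essentially, together with the tangential bracket $[e_{\bar l}^T,e_{\bar j}^T]=\tfrac1{r^2}(\bar t^l_t\delta^v_i\delta^j_r-\bar t^j_i\delta^v_t\delta^l_r)e_{\bar r}^T$. Two things must be checked: first, the horizontal component has to cancel identically, which is the statement that the fibers are totally geodesic in the horizontal directions; second, the remaining tangential contributions must collapse to $-\tfrac1{r^2}\bar t^j_i\delta^l_r\delta^v_t e_{\bar r}^T$, the intrinsic Levi-Civita connection of the round fiber sphere. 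In carrying this out I would repeatedly use the relation $t^i_je_{\bar j}^T=0$ to discard normal components and to simplify contractions. As an independent check one may instead invoke the Gauss formula $\widetilde{\nabla}_X Y=({}^{CG}\nabla_X Y)^{\top}$, projecting the ambient connection of the preceding Proposition along the normal field $N=t^i_je_{\bar j}$; this recovers the same four formulas and confirms in particular that the fiberwise terms reduce to the single $-\tfrac1{r^2}$ contribution once the constancy of $\tau$ on $T_{1r}^1M$ is used.
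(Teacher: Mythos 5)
Your proposal is correct and follows essentially the same route as the paper: the Koszul formula applied to the induced metric (\ref{g1}) in the adapted frame $\{e_l, e^T_{\bar j}\}$, using the listed bracket relations and the constancy of $\tau$ on $T^1_{1r}M$ to kill all derivative terms except those producing the base Christoffel symbols, the curvature contributions, and the fiber-sphere term. The Gauss-formula cross-check against the ambient connection ${}^{CG}\nabla$ is a sound additional verification, though not part of the paper's argument.
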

\subsection{An almost paracontact structure on $T^1_{1r}M$}
In this section, we show that the
framed $f(3,-1)$- structure on $T^1_1M$, given by Theorem \ref{t1}, induces an almost paracontact
structure on $T^1_{1r}M$.

First, we show that $\xi_2$ and $\xi_3$ are unit normal vector fields with respect to the
metric $^{CG}g$. Let
\begin{equation}
x^i=x^i(u^\alpha),\ \ \ t^i_j=t^i_j(u^\alpha),\ \ \ \alpha \in
\{1,...,n\},
\end{equation}
be the local equations of $T^1_{1r}M$ in $T^1_1M$. Since
$\tau=t^i_jt^t_lg^{jl}g_{it}=r^2$, we have
\begin{equation}
\frac{\partial \tau}{\partial x^j}\frac{\partial x^j}{\partial
u^\alpha}+\frac{\partial \tau}{\partial t^k_h}\frac{\partial
t^k_h}{\partial u^\alpha}=0.\label{f}
\end{equation}
But we have
\begin{equation}
\frac{\partial \tau}{\partial
x^j}=2(\Gamma^k_{js}t^s_h-\Gamma^s_{jh}t^k_s)\bar{t}^h_k,\ \ \ \
\ \  \frac{\partial \tau}{\partial t^k_h}=2\bar{t}^h_k.\label{f1}
\end{equation}
By replacing (\ref{f1}) into (\ref{f}), we get
\begin{equation}
((\Gamma^k_{js}t^s_h-\Gamma^s_{jh}t^k_s)\frac{\partial
x^j}{\partial u^\alpha}+\frac{\partial t^k_h}{\partial
u^\alpha})\bar{t}^h_k=0.\label{28}
\end{equation}
The natural frame field on $T^1_{1r}M$ is represented by
\begin{equation}
\frac{\partial}{\partial u^\alpha}=\frac{\partial x^j}{\partial
u^\alpha}\frac{\partial}{\partial x^j}+\frac{\partial
t^k_h}{\partial u^\alpha}\frac{\partial}{\partial
t^k_h}=\frac{\partial x^j}{\partial
u^\alpha}e_j+((\Gamma^k_{js}t^s_h-\Gamma^s_{jh}t^k_s)\frac{\partial x^j}{\partial u^\alpha}+\frac{\partial
t^k_h}{\partial u^\alpha})e_{\bar{h}}.
\end{equation}
Then by (\ref{28}), we deduce that
\begin{equation}
^{CG}g(\frac{\partial}{\partial u^\alpha},\xi_3)
=\kappa(a+b\tau)((\Gamma^k_{js}t^s_h-\Gamma^s_{jh}t^k_s)\frac{\partial x^j}{\partial u^\alpha}+\frac{\partial
t^k_h}{\partial u^\alpha})\bar{t}^h_k=0.
\end{equation}
Similarly we obtain $^{CG}g(\frac{\partial}{\partial u^\alpha}, \xi_2)=0$. Thus $\xi_2$ and $\xi_3$ are orthogonal to any vector tangent to $T^1_{1r}M$. The vector field $\xi_1$
 is tangent to $T^1_{1r}M$ since $^{CG}g(\xi_1,\xi_2)=0$.
\begin{lem}\label{t5}
On $T^1_{1r}M$, we have
\[
\eta^2=\eta^3=0,\ \ \ p(X)=P(X)-\eta^1(X)\xi_1, \ \ \forall X\in\chi(T^1_{1r}M).
\]
\end{lem}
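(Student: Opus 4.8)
The plan is to split the statement into its two assertions and dispatch them in order: first the vanishing $\eta^2=\eta^3=0$, then the reduced form of $p$. Both rest only on facts already available, namely that $\xi_2,\xi_3$ are normal and $\xi_1$ is tangent to $T^1_{1r}M$ (established immediately above), together with the metric identities produced inside the proof of Lemma \ref{Dr}.

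For the first assertion I would use that $\eta^2$ and $\eta^3$ are, up to nonvanishing scalar factors, the ${}^{CG}g$-duals of $\xi_2$ and $\xi_3$. The proof of Lemma \ref{Dr} records
\[
{}^{CG}g(X,\xi_2)=\frac{a\beta}{\lambda}\,\eta^2(X),\qquad {}^{CG}g(X,\xi_3)=\frac{\kappa}{\rho}(a+b\tau)\,\eta^3(X),
\]
for arbitrary $X$. Taking $X$ tangent to $T^1_{1r}M$ and invoking the normality of $\xi_2,\xi_3$ makes both left-hand sides vanish; since $a>0$, $a+b\tau>0$ and $\beta,\lambda,\kappa,\rho$ are nowhere zero, the scalar factors are invertible, forcing $\eta^2(X)=\eta^3(X)=0$. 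As $X$ ranges over all of $\chi(T^1_{1r}M)$ this yields $\eta^2=\eta^3=0$ on the sphere bundle.

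For the second assertion I would substitute $\eta^2=\eta^3=0$ directly into the defining relation (\ref{3.20}). Two of the three correction terms drop out, collapsing $p$ to the single $\eta^1$-term displayed in the statement for every $X\in\chi(T^1_{1r}M)$. It is then worth recording, from (\ref{3.26}), that the same substitution turns the ambient identities into $p^2=I-\eta^1\otimes\xi_1$, $p(\xi_1)=0$, $\eta^1(\xi_1)=1$ and $\eta^1\circ p=0$ on $T^1_{1r}M$, so that $(p,\xi_1,\eta^1)$ is exactly an almost paracontact structure; this confirms that $\xi_1$ is the surviving characteristic field, consistent with the displayed formula.

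The genuinely load-bearing step is the first one, and within it the normality of $\xi_2$ and $\xi_3$. That normality is precisely the content of the constraint (\ref{28}) (equivalently $\tau=r^2$), used in the same way for $\xi_2$ as for $\xi_3$; the only thing one must be careful about is that the duality coefficients $\frac{a\beta}{\lambda}$ and $\frac{\kappa}{\rho}(a+b\tau)$ never degenerate, which is guaranteed by the standing positivity hypotheses $a>0$, $a+b\tau>0$ and by (\ref{3.25}). Once $\eta^2=\eta^3=0$ is in hand, the collapse of (\ref{3.20}) and the paracontact identities inherited from (\ref{3.26}) are immediate.
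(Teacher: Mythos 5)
Your proposal is correct and follows essentially the same route as the paper, whose one-line proof likewise reads off $\eta^i(X)$ for $i=2,3$ as a (nonzero) multiple of ${}^{CG}g(X,\xi_i)$ and invokes the normality of $\xi_2,\xi_3$ established just before the lemma; you merely make explicit the nondegeneracy of the proportionality factors $\frac{a\beta}{\lambda}$ and $\frac{\kappa}{\rho}(a+b\tau)$, which the paper leaves implicit. The second assertion is, in both cases, the immediate substitution of $\eta^2=\eta^3=0$ into (\ref{3.20}).
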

\begin{proof}
Using $\eta^i|_{T^1_{1r}M}(X)={}^{CG}g(X,\xi_i)=0, i=2,3$, the proof is obvious.
\end{proof}
We put $\xi_1|_{T^1_{1r}M}=\xi$, $\eta^1|_{T^1_{1r}M}=\eta$ and $p|_{T^1_{1r}M}=p$. Then Theorem \ref{t1} and Lemma \ref{t5} implie the following.
\begin{thm}\label{t6}
If (\ref{3.25}) holds, then the triple $(p, \xi, \eta)$ defines an almost paracontact structure on $T^1_{1r}M$, that is,

(i)\ \ $\eta(\xi)=1, \ \ p(\xi)=0,\ \ \eta\circ p=0$.

(ii)\ \ ${p}^2(X)=X-\eta(X)\xi,\ \ X\in\chi(T^1_{1r}M)$.\\
\end{thm}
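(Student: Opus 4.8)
The plan is to obtain the whole statement by specializing to the hypersurface $T^1_{1r}M$ the ambient framed $f(3,-1)$-structure identities already recorded in (\ref{3.26}). Since (\ref{3.25}) is assumed, Theorem \ref{t1} together with Lemma \ref{binam} guarantees that $(p,(\xi_k),(\eta^k))$ is a framed $f(3,-1)$-structure on $T^1_1M$; in particular (\ref{3.26}) holds, namely
\[
p^2=I-\eta^1\otimes\xi_1-\eta^2\otimes\xi_2-\eta^3\otimes\xi_3,\quad p(\xi_k)=0,\quad \eta^k(\xi_l)=\delta^k_l,\quad \eta^k\circ p=0,
\]
for $k,l\in\{1,2,3\}$. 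I will also use that $\xi_1$ is tangent to $T^1_{1r}M$ while $\xi_2,\xi_3$ are normal, and that $\eta^2=\eta^3=0$ along $T^1_{1r}M$ (Lemma \ref{t5}), so that $\xi=\xi_1|_{T^1_{1r}M}$, $\eta=\eta^1|_{T^1_{1r}M}$ and $p=p|_{T^1_{1r}M}$ are well defined as objects on $T^1_{1r}M$.

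Part (i) is then immediate. Putting $k=l=1$ in the scalar and vector relations of (\ref{3.26}) and restricting to $T^1_{1r}M$ yields $\eta(\xi)=\eta^1(\xi_1)=1$, $p(\xi)=p(\xi_1)=0$, and $(\eta\circ p)(X)=(\eta^1\circ p)(X)=0$ for every $X\in\chi(T^1_{1r}M)$; these are exactly the three identities claimed in (i).

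The one substantive point is part (ii): I must make sure that $p$ genuinely maps $TT^1_{1r}M$ into itself, so that the intrinsically computed $p^2$ coincides with the ambient $p^2$ of (\ref{3.26}). The key observation is that $\eta^2$ vanishes on $TT^1_{1r}M$ (Lemma \ref{t5}) yet $\eta^2(\xi_2)=1$ by (\ref{3.26}) with $\xi_2$ normal to the hypersurface; since $T^1_{1r}M$ has codimension one in $T^1_1M$, this forces $\ker\eta^2=TT^1_{1r}M$ exactly. Consequently, for any $X\in\chi(T^1_{1r}M)$ the relation $\eta^2\circ p=0$ gives $\eta^2(p(X))=0$, whence $p(X)\in TT^1_{1r}M$. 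Thus $p$ preserves tangency and squaring the restricted operator is legitimate.

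Finally, evaluating the ambient relation $p^2=I-\eta^1\otimes\xi_1-\eta^2\otimes\xi_2-\eta^3\otimes\xi_3$ on a tangent vector $X$ and using $\eta^2(X)=\eta^3(X)=0$ collapses the last two terms, leaving
\[
p^2(X)=X-\eta^1(X)\xi_1=X-\eta(X)\xi,
\]
which is (ii); the sign here is $+$ precisely because the ambient structure is $f(3,-1)$ rather than $f(3,1)$, so the induced structure is paracontact. I expect the only genuine obstacle to be the tangency check in the previous paragraph, namely verifying $p(TT^1_{1r}M)\subseteq TT^1_{1r}M$ so that $p^2$ is well defined on the hypersurface; everything else is a direct reading of (\ref{3.26}).
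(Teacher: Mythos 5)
Your argument is correct and follows the same route as the paper, which derives the theorem by restricting the ambient framed $f(3,-1)$-structure of Theorem \ref{t1} (i.e.\ the relations (\ref{3.26})) to the hypersurface via Lemma \ref{t5}. The only addition is your explicit check that $p$ maps $TT^1_{1r}M$ into itself using $\ker\eta^2=TT^1_{1r}M$ and $\eta^2\circ p=0$; the paper leaves this well-definedness step implicit, so your version is a slightly more careful rendering of the same proof.
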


It is easy to show that if (\ref{3.25}) and (\ref{3.28}) hold, then the Riemannian metric $\widetilde{g}$ satisfies
\begin{equation}
\widetilde{g}(pX, pY)=\widetilde{g}(X,Y)-\eta(X)\eta(Y),\ \ X,Y\in\chi(T^1_{1r}M).\label{29}
\end{equation}
By the equation  (\ref{29}) and Theorem \ref{t6},  we conclude the following.
\begin{thm}
If (\ref{3.25}) and (\ref{3.28}) hold then the ensemble $(p, \xi, \eta, \widetilde{g})$ defines an almost
metrical paracontact structure on the tangent sphere bundle $T^1_{1r}M$.
\end{thm}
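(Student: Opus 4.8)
The plan is to recognize that this statement is the conjunction of two facts already assembled in the preceding development, so that the proof reduces to checking that together they exhaust the defining axioms of an almost metrical paracontact structure. Recall that a quadruple $(p, \xi, \eta, \widetilde{g})$ constitutes an almost metrical paracontact structure on a manifold provided that, first, $(p, \xi, \eta)$ is an almost paracontact structure, i.e. $\eta(\xi)=1$ and $p^2 = I - \eta\otimes\xi$ (from which $p\xi = 0$ and $\eta\circ p = 0$ follow automatically), and second, the metric $\widetilde{g}$ is compatible with this structure in the sense that $\widetilde{g}(pX, pY) = \widetilde{g}(X, Y) - \eta(X)\eta(Y)$ for all $X, Y\in\chi(T^1_{1r}M)$.

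First I would invoke Theorem \ref{t6}: under hypothesis (\ref{3.25}) the triple $(p, \xi, \eta)$ already realizes the first axiom, supplying $\eta(\xi)=1$, $p(\xi)=0$, $\eta\circ p = 0$, and $p^2(X) = X - \eta(X)\xi$ for every $X$. This disposes of the almost paracontact part with no further computation. Next I would invoke equation (\ref{29}): under the combined hypotheses (\ref{3.25}) and (\ref{3.28}) the induced metric $\widetilde{g}$ satisfies precisely the compatibility relation $\widetilde{g}(pX, pY) = \widetilde{g}(X, Y) - \eta(X)\eta(Y)$, which is the second axiom. To close the loop on the metric side I would verify the expected consequence $\eta(X) = \widetilde{g}(X, \xi)$ by specializing this compatibility to $Y = \xi$: since $p\xi = 0$ and $\eta(\xi)=1$, the left-hand side vanishes while the right-hand side equals $\widetilde{g}(X,\xi) - \eta(X)$, forcing $\eta = \widetilde{g}(\,\cdot\,, \xi)$. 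Thus $\eta$ is indeed the $\widetilde{g}$-dual of $\xi$, all the axioms are met, and the ensemble $(p, \xi, \eta, \widetilde{g})$ is an almost metrical paracontact structure on $T^1_{1r}M$.

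Because every ingredient is already in place, there is essentially no obstacle at the level of this theorem; the genuine work lies upstream. Were one to expand the justification of (\ref{29}) instead of citing it, the real difficulty would be the curvature- and fiber-term bookkeeping encapsulated in Lemma \ref{Dr}, combined with the observation recorded in Lemma \ref{t5} that restricting to the hypersurface $T^1_{1r}M$ annihilates $\eta^2$ and $\eta^3$ (since $\xi_2$ and $\xi_3$ are normal to $T^1_{1r}M$). Consequently only the single term built from $\eta^1 = \eta$ survives in Lemma \ref{Dr}, and one must check that under (\ref{3.28}) its coefficient $a\beta\bigl(\tfrac{2(c_1+d_1\|E\|^2)}{\gamma} - \beta\|E\|^2\bigr)\|E\|^2$ collapses to exactly $1$, which is the content of (\ref{29}).
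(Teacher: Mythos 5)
Your proposal is correct and follows essentially the same route as the paper, whose entire proof is the one-line observation that Theorem \ref{t6} supplies the almost paracontact axioms under (\ref{3.25}) and equation (\ref{29}) supplies the metric compatibility under (\ref{3.25}) and (\ref{3.28}). Your extra check that $\eta=\widetilde{g}(\cdot,\xi)$ and your sketch of how (\ref{29}) follows from Lemmas \ref{Dr} and \ref{t5} are consistent with, and slightly more explicit than, what the paper records.
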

\subsection{ Non-existence $(1,1)$- tensor sphere bundles space form}
The curvature tensor field $\widetilde{R}$ of the connection
$\widetilde{\nabla}$ is defined by the well-known formula
\[
\widetilde{R}(\widetilde{X},\widetilde{Y})\widetilde{Z}=\widetilde{\nabla}_{\widetilde{X}}\widetilde{\nabla}_{\widetilde{Y}}\widetilde{Z}-\widetilde{\nabla}_{\widetilde{Y}}\widetilde{\nabla}_{\widetilde{X}}\widetilde{Z}
-\widetilde{\nabla}_{[\widetilde{X},\widetilde{Y}]}\widetilde{Z},
\]
where $\widetilde{X}, \widetilde{Y},
\widetilde{Z}\in\Im^1_0(T^1_{1r}M)$. Using the above equation, Proposition \ref{2} and the local frame
$\{e_j,e_ {\bar{j}}^T\}_{\bar j=1}^{n+n^2}$ we obtain
\begin{eqnarray}
\widetilde{R}(e_m,e_l)e_j\!\!\!\!&=&\!\!\!\!HHHH^r_{mlj}e_r+HHHT^{\bar{r}}_{mlj}e_{\bar{r}}^T,\label{rie2}\\
\widetilde{R}(e_m,e_l)e_{\bar
{j}}^T\!\!\!\!&=&\!\!\!\!HHTH^r_{ml{\bar{j}}}e_r
+HHTT^{\bar{r}}_{ml\bar{j}}e_{\bar{r}}^T,\\
\widetilde{R}(e_m,e_{\bar{l}}^T)e_j\!\!\!\!&=&\!\!\!\!HTHH^r_{m\bar{l}j}e_r+HTHT^{\bar{r}}_{m\bar{l}j}e_{\bar{r}}^T,\\
\widetilde{R}(e_{m},e_{\bar{l}}^T)e_{\bar{j}}^T\!\!\!\!&=&\!\!\!\!HTTH^r_{m\overline{l}\bar{j}}e_r,\label{rie1}\\
\widetilde{R}(e_{\bar{m}}^T,e_{\bar{l}}^T)e_{j}\!\!\!\!&=&\!\!\!\!TTHH^r_{{\bar{m}}\overline{l}j}e_r,
\\
\widetilde{R}(e_{\bar{m}}^T,e_{\bar{l}}^T)e_{\bar{j}}^T
\!\!\!\!&=&\!\!\!\!TTTT^{\bar{r}}_{\bar{m}\bar{l}\bar{j}}e_{\bar{r}}^T,\label{n2}
\end{eqnarray}
where
\begin{eqnarray*}
HHHH^{\ \ \ \ r}_{mlj }\!\!\!\!&=&\!\!\!\!R^{\ \ \ \
r}_{mlj}+\frac{a}{4}\{g_{ka}(R^{sh\ r}_{\ \ m}R^{\ \ \
p}_{ljh}-R^{sh\ r}_{\ \ l}R^{\ \ \ p}_{mjh}-2R^{sh\ r}_{\ \
j}R_{mlh}^{\ \ \ p})t^a_st^k_p
\nonumber\\
\!\!\!\!&&\!\!\!\!+g_{ka}(R^{sh\ r}_{\ \ l}R^{\ \ \
k}_{mjp}-R^{sh\ r}_{\ \ m}R^{\ \ \ \ k}_{ljp}+2R^{sh\ r}_{\ \
j}R_{mlp}^{\ \ \ k})t^a_st^p_h
\nonumber\\
\!\!\!\!&&\!\!\!\!+g^{hb}(R^{\ \ \ r}_{kpl}R^{\ \ \ \
s}_{mjh}-R^{\ \ \ \ r}_{kpm}R^{\ \ \ s}_{ljh}+2R^{\ \ \ \ r}_{kpj
}R_{mlh}^{\ \ \ \ s})t^p_bt^k_s
\nonumber\\
\!\!\!\!&&\!\!\!\!+g^{hb}(R^{\ \ \ r}_{ksm}R^{\ \ \ \
k}_{ljp}-R^{\ \ \ \ r}_{ksl}R^{\ \ \ k}_{mjp}-2R^{\ \ \ \ r}_{ksj
}R_{mlp}^{\ \ \ \ k})t^s_bt^p_h\},
\end{eqnarray*}
\[
HHHT^{\ \ \ \ \overline{r}}_{mlj}=\frac{1}{2}\{\nabla_mR_{ljr}^{\
\ \ s}t^v_s-\nabla_lR_{mjr}^{\ \ \ \ s}t^v_s+\nabla_lR_{mjs}^{\ \
\ \ v}t^s_r-\nabla_mR_{ljs}^{\ \ \ v}t^s_r\},
\]
\[
HHTH^{\ \ \ \ r}_{ml\overline{j}}=\frac{a}{2}\{g_{ia}\nabla_mR_{\
\ l}^{sj\ r}t^a_s-\nabla_lR_{\ \ m}^{sj\ r}
t^a_s+g^{jb}\nabla_lR_{ism}^{\ \ \ r }t^s_b-\nabla_mR_{isl}^{\ \
\ r}t^s_b\},
\]
\begin{eqnarray*}
HHTT^{\ \ \ \ \overline{r}}_{ml\overline{j}
}\!\!\!\!&=&\!\!\!\!R^{\ \ \ \ v}_{mli}\delta^j_r-R_{mlr}^{\ \ \ j
}\delta^v_i+\frac{a}{4}\{g_{ia}(R^{\ \ \ \ s}_{mhr}R^{pj\ h }_{\ \
l }-R^{\ \ \ s}_{lhr}R^{pj\ h}_{\ \ m})t^v_st^a_p\nonumber\\
\!\!\!\!&&\!\!\!\!+g_{ia}(R^{\ \ \ \ v}_{lhp}R^{sj\ h}_{\ \
m}-R^{\ \ \ \ v}_{mhp}R^{sj\ h}_{\ \ l})t^a_st^p_r +g^{jb}(R^{\ \
\ \ s}_{lhr}R^{\ \ \ \ h}_{ipm}
\nonumber\\
\!\!\!\!&&\!\!\!\!-R^{\ \ \ \ s}_{mhr}R^{\ \ \
h}_{ipl})t^p_bt^v_s+g^{jb}(R^{\ \ \ \ v}_{mhs}R^{\ \ \
h}_{ipl}-R^{\ \ \ \ v}_{lhs}R^{\ \ \ \ h}_{ipm})t^s_rt^p_b\}\nonumber\\
\!\!\!\!&&\!\!\!\!+\frac{1}{r^2}(R_{mlr}^{\ \ \ \ s
}t^v_s-R_{mls}^{\ \ \ \ v }t^s_r)\overline{t}^j_i,
\end{eqnarray*}
\[
HTHH^{\ \ \ r}_{m\overline{l}j}=\frac{a}{2}\{g_{ta}\nabla_mR_{\ \
j}^{sl\ r}t^a_s-g^{lb}\nabla_mR_{tsj}^{\ \ \ r}t^s_b\},
\]
\begin{eqnarray*}
HTHT^{\ \ \ \ \overline{r}}_{m\overline{l}j
}\!\!\!\!&=&\!\!\!\!-\frac{1}{2}(R^{\ \ \ \ l}_{mjr
}\delta^v_t-R_{mjt}^{\ \ \ \ v}\delta^l_r)
+\frac{a}{4}\{g_{ta}R^{pl\ \ h}_{\ \ j}R_{mhr}^{\ \ \ \ \
s}t^v_st^a_p\nonumber\\
\!\!\!\!&&\!\!\!\!-g^{lb}R^{\ \ \ h}_{tpj }R^{\ \ \ \ \
s}_{mhr}t^v_st^p_b -g_{ta}R^{sl\ h}_{\ \ j}R^{\ \ \ \ \
v}_{mhp}t^p_rt^a_s\nonumber\\
\!\!\!\!&&\!\!\!\!+g^{lb}R^{\ \ \ h}_{tpj}R^{\ \ \ \ v
}_{mhs}t^s_rt^p_b\},
\end{eqnarray*}
\begin{eqnarray*}
HTTH^{\ \ \ \ r}_{m\overline{l}\overline{j}
}\!\!\!\!&=&\!\!\!\!\frac{a}{2}(g^{jl}R_{itm}^{\ \ \
r}-g_{it}R^{lj\ \ r}_{\ \ m})+\frac{a^2}{4}\{g_{ta}R^{sl\ r}_{\ \
h}g^{jb}R^{\ \ \ \ h}_{ipm}t^a_st^p_b
\nonumber\\
\!\!\!\!&&\!\!\!\!-g_{ta}R^{sl\ r}_{\ \ h}g_{ib}R^{pj \ h}_{\ \
m}t^a_st^b_p+g^{lb}R^{\ \ \ r}_{tph}g_{ia}R^{sj\ h}_{\ \
m}t^p_bt^a_s
\nonumber\\
\!\!\!\!&&\!\!\!\!-g^{la}R^{\ \ \ \ r}_{tsh}g^{jb}R^{\ \ \
h}_{ipm}t^s_at^p_b\}-\frac{a}{2r^2}(g_{ta}R^{sl \ r}_{\ \
m}t^a_s\nonumber\\
\!\!\!\!&&\!\!\!\!-g^{lb}R^{\ \ \ \
r}_{tsm}t^s_b)\overline{t}^j_i,
\end{eqnarray*}
\begin{eqnarray*}
TTHH^{\ \ \ \ r}_{\overline{m}\overline{l}j
}\!\!\!\!&=&\!\!\!\!a(g_{tn}R^{ml\ r}_{\ \ j}-g^{lm}R_{tnj}^{\ \ \
\ r})+\frac{a^2}{4}\{g_{na}R^{sm\ r}_{\ \ \ h}g_{tb}R^{pl\ h}_{\ \
j}t^a_st^b_p
\nonumber\\
\!\!\!\!&&\!\!\!\!-g_{ta}R^{sl\ r}_{\ \ h }g_{nb}R^{pm\ h }_{\ \
j}t^a_st^b_p+g_{ta}R^{sl\ r}_{\ \ h}g^{mb}R^{\ \ \ \
h}_{npj}t^a_st^p_b
\nonumber\\
\!\!\!\!&&\!\!\!\!-g_{na}R^{sm\ r}_{\ \ h}g^{lb}R_{tpj}^{\ \ \ h
}t^a_st^p_b+g^{lb}R^{\ \ \ r}_{tph}g_{na}R^{sm\ h }_{\ \ j
}t^p_bt^a_s
\nonumber\\
\!\!\!\!&&\!\!\!\!-g^{mb}R_{nph}^{\ \ \ r }g_{ta}R^{sl\ h }_{\ \ j
}t^p_bt^a_s+g^{ma}R^{\ \ \ \ r}_{nsh}g^{lb}R^{\ \ \ h
}_{tsj }t^p_bt^s_a\nonumber\\
\!\!\!\!&&\!\!\!\!-g^{la}R_{tsh}^{\ \ \ r }g^{mb}R^{\ \ \ \ h
}_{npj }t^p_bt^s_a\},
\end{eqnarray*}
\begin{eqnarray*}
TTTT^{\ \ \ \
\overline{r}}_{\overline{m}\overline{l}\overline{j}}\!\!\!\!&=&\!\!\!\!
\frac{1}{r^4}(\overline{t}^m_n\overline{t}^j_i\delta_r^l\delta^v_t
-\overline{t}^l_t\overline{t}^j_i\delta_r^m\delta^v_n)+\frac{1}{r^2}
(g^{lj}g_{ti}\delta^m_r\delta^v_n\nonumber\\
\!\!\!\!&&\!\!\!\! -g^{mj}g_{ni}\delta^m_r\delta^v_n ).
\end{eqnarray*}
\begin{thm}\label{n5}
$(1, 1)$-tensor sphere bundle $T_{1r}^1M$, with the Riemannian metric
$\widetilde{g}$ induced from the metric ${}^{CG}g$ on $T_{1}^1M$, has never constant
sectional curvature.
\end{thm}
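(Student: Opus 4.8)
To show $T^1_{1r}M$ never has constant sectional curvature, I would argue by contradiction: if $(T^1_{1r}M,\widetilde g)$ were a space form of constant curvature $c$, then its curvature tensor would be forced into the rigid algebraic shape
\[
\widetilde R(\widetilde X,\widetilde Y)\widetilde Z
= c\bigl(\widetilde g(\widetilde Y,\widetilde Z)\widetilde X-\widetilde g(\widetilde X,\widetilde Z)\widetilde Y\bigr),
\]
and I would test this identity against the explicit curvature components $HHHH,\,HHHT,\,\ldots,\,TTTT$ computed above, reading off the metric values from \eqref{g1} in the adapted frame $\{e_j,e^T_{\bar j}\}$. The strategy is to pick the one block where the space-form condition is most over-determined and derive an inconsistency there, so that no base manifold $(M,g)$ and no choice of constant $a>0$ can satisfy it.

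\textbf{Where to push.} The natural place to look is the purely vertical block, i.e. the component $TTTT^{\ \ \ \overline r}_{\overline m\overline l\overline j}$. Since $\widetilde g(e^T_{\bar l},e^T_{\bar j})$ is (up to the constant $a$) the tangential extension of $G$, the space-form condition restricted to vertical vectors would demand that
\[
TTTT^{\ \ \ \overline r}_{\overline m\overline l\overline j}
= c\,a\bigl(G(e^T_{\bar l},e^T_{\bar j})\,\delta^{\,\overline r}_{\overline m}
- G(e^T_{\bar m},e^T_{\bar j})\,\delta^{\,\overline r}_{\overline l}\bigr)
\]
in the appropriate index form. But the displayed $TTTT$ has the explicit shape
\[
\tfrac{1}{r^4}(\overline t^m_n\overline t^j_i\delta^l_r\delta^v_t-\overline t^l_t\overline t^j_i\delta^m_r\delta^v_n)
+\tfrac{1}{r^2}(g^{lj}g_{ti}\delta^m_r\delta^v_n-g^{mj}g_{ni}\delta^l_r\delta^v_t),
\]
which mixes terms quadratic in $\overline t$ with terms built from $g$ alone, whereas the right-hand side coming from a constant $c$ produces a single homogeneous pattern in the metric coefficients $a g^{\cdot}g_{\cdot}+b\overline t\,\overline t$ restricted to the sphere. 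Matching coefficients of the four tensorially independent monomials forces a system of linear relations among the would-be curvature constant and the two geometric scales $1/r^2,\,1/r^4$.

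\textbf{The finishing step.} This is exactly the situation that Lemma \ref{esileila} was engineered to handle: after moving everything to one side the vertical identity takes the form \eqref{s1}, a linear combination of the four basic tensors $g_{ti}g^{lj}\delta^m_r\delta^v_n$, $g_{ni}g^{mj}\delta^l_r\delta^v_t$, $\overline t^m_n\overline t^j_i\delta^l_r\delta^v_t$, $\overline t^l_t\overline t^j_i\delta^m_r\delta^v_n$ with coefficients $\alpha_1,\dots,\alpha_4$ that are functions of the curvature constant and of $r$. By Lemma \ref{esileila} each $\alpha_i$ must vanish, and I expect the coefficient of the $1/r^4$ block (namely $\pm 1/r^4$, which carries no free parameter) to be among them; since $r>0$ this coefficient is nonzero, giving the contradiction. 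The main obstacle is purely bookkeeping: correctly expanding the space-form right-hand side in the adapted frame, keeping the upper/lower index conventions of $\overline t^j_i=g^{jh}g_{ik}t^k_h$ consistent with the components as written, and verifying that the vertical block alone already suffices so that the horizontal and mixed blocks $HHHH,\ HHTT,\ TTHH$ need not be invoked. If the vertical block should turn out to be satisfiable for some degenerate $(M,g)$, the fallback is to bring in the mixed block $TTHH^{\ \ \ r}_{\overline m\overline l j}$, whose leading term $a(g_{tn}R^{ml\ r}_{\ \ j}-g^{lm}R^{\ \ \ r}_{tnj})$ must then match $c$ times a metric expression, forcing a curvature condition on $(M,g)$ that clashes with the $1/r^2$-scale already fixed by $TTTT$.
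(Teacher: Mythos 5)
Your main line of attack has a genuine gap: the purely vertical block does \emph{not} produce a contradiction by itself. You expect the coefficient of the $\frac{1}{r^4}\,\overline{t}\,\overline{t}$ terms in $TTTT^{\ \ \ \overline r}_{\overline m\overline l\overline j}$ to carry no free parameter, but the right-hand side of the space-form identity also contributes $\overline t\,\overline t$ terms, because the induced metric (\ref{g1}) on tangential vectors is $a\bigl(G(A,B)-\frac{1}{r^2}G(t,A)G(t,B)\bigr)$ and therefore itself contains a $-\frac{a}{r^2}\overline t^{\,l}_t\overline t^{\,j}_i$ correction. Once everything is moved to one side, all four coefficients in the four-term combination of Lemma~\ref{esileila} acquire the common factor $\frac{1-kr^2a}{r^2}$ --- this is exactly equation (\ref{n3}) in the paper --- so the lemma only forces $1-kr^2a=0$, i.e.\ $k\neq0$ and $a=\frac{1}{kr^2}$. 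The vertical block is thus satisfiable for \emph{every} base manifold; it merely pins down $a$, and no contradiction appears at this stage.

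Your fallback (bring in a mixed block) points in the right direction but is not carried far enough to close the argument: a ``clash with the $1/r^2$-scale'' does not follow from $TTTT$ together with $TTHH$ alone without substantial further work. The paper needs two more blocks after $TTTT$: it tests the $HHHH$ identity (\ref{lei4}) and, by differentiating twice in the fibre coordinates, concludes that the base manifold itself must be a space form, $R^{\ \ \ r}_{mlj}=k(g_{lj}\delta^r_m-g_{mj}\delta^r_l)$ as in (\ref{4.49}); it then substitutes this together with $a=\frac{1}{kr^2}$ into the mixed block $HTTH$ of (\ref{rie1}), whose space-form right-hand side vanishes because $\widetilde g(e_l,e^T_{\bar j})=0$, and evaluates the resulting identity at the point $t^j_i=\delta^j_i$ to obtain an impossible relation among $g$ and Kronecker deltas. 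So the overall architecture you describe --- test the space-form identity block by block in the adapted frame and finish with Lemma~\ref{esileila} --- is the right one, but the specific block you bet on cannot deliver the contradiction, and a complete proof must combine at least three blocks as the paper does.
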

\begin{proof}
It is known that the curvature tensor field of the
Riemannain manifold $(T_{1r}^1M,\widetilde{g})$ with constant
section
curvature $k$, satisfy the relation\\
\begin{equation}
\widetilde{R}(\widetilde{X},\widetilde{Y})\widetilde{Z}=k\{\widetilde{g}(\widetilde{Y},\widetilde{Z})
\widetilde{X}-\widetilde{g}(\widetilde{X},\widetilde{Z})\widetilde{Y}\},\label{r1}
\end{equation}
where $\widetilde{X}, \widetilde{Y},
\widetilde{Z}\in\Im^1_0(T^1_{1r}M)$. Let
$(T_{1r}^1M,\widetilde{g})$  have constant sectional curvature
$k$. Then we have
\begin{equation}
\widetilde{R}(e_{\bar{m}}^T,e_{\bar{l}}^T)e_{\bar{j}}^T-k\{\widetilde{g}(e^T_{\bar{l}},e^T_{\bar{j}})e_{\bar{m}}^T
-\widetilde{g}(e^T_{\bar{m}},e^T_{\bar{j}})e_{\bar{l}}^T\}=0.\label{lei1}
\end{equation}
Using (\ref{lei1}) and (\ref{n2}), we get
\begin{equation}
\frac{1-k
r^2a}{r^2}[g_{ti}g^{lj}\delta^m_r\delta^v_n-g_{ni}g^{mj}\delta^l_r\delta^v_t+
\frac{1}{r^2}(\overline{t}^m_n\overline{t}^j_i\delta^l_r\delta^v_t-\overline{t}^l_t\overline{t}^j_i\delta^m_r\delta^v_n)]=0\label{n3}.
\end{equation}
Using the above equation and Lemma \ref{esileila} we deduce $k\neq 0$ and $a=\frac{1}{k r^2}$.
Since $(T_{1r}^1M,\widetilde{g})$  has constant sectional
curvature $k$, we have
\begin{equation}
\widetilde{R}(e_m,e_l)e_j-k\{\widetilde{g}(e_l,e_j)e_m-\widetilde{g}(e_m,e_j)e_l\}=0\label{lei2}
%
\end{equation}
(\ref{rie2}) and (\ref{lei2}) give us
 \begin{eqnarray}
\!\!\!\!&&\!\!\!\!R^{\ \ \ \
r}_{mlj}-k(g_{lj}\delta^r_m-g_{mj}\delta^r_l)+\frac{a}{4}\{g_{ka}(R^{sh\
r}_{\ \ m}R^{\ \ \
p}_{ljh}\nonumber\\
 \!\!\!\!&&\!\!\!\!-R^{sh\ r}_{\ \ l}R^{\ \ \ p}_{mjh}-2R^{sh\ r}_{\ \
j}R_{mlh}^{\ \ \ p})t^a_st^k_p +g_{ka}(R^{sh\ r}_{\ \ l}R^{\ \ \
k}_{mjp}\nonumber\\
\!\!\!\!&&\!\!\!\!-R^{sh\ r}_{\ \ m}R^{\ \ \ \ k}_{ljp}+2R^{sh\
r}_{\ \ j}R_{mlp}^{\ \ \ k})t^a_st^p_h+g^{hb}(R^{\ \ \
r}_{ksm}R^{\ \ \ \
k}_{ljp}\nonumber\\
\!\!\!\!&&\!\!\!\!-R^{\ \ \ \ r}_{ksl}R^{\ \ \ k}_{mjp}-2R^{\ \ \
\ r}_{ksj }R_{mlp}^{\ \ \ \ k})t^s_bt^p_h +g^{hb}(R^{\ \ \
r}_{kpl}R^{\ \ \ \
s}_{mjh}\nonumber\\
\!\!\!\!&&\!\!\!\!-R^{\ \ \ \ r}_{kpm}R^{\ \ \ s}_{ljh}+2R^{\ \ \
\ r}_{kpj }R_{mlh}^{\ \ \ \ s})t^p_bt^k_s \}=0,\label{lei4}
\end{eqnarray}

Differentiating the expression (\ref{lei4}) two times, in the
tangential coordinates $x^{\bar{j}}; \bar{j}=1,\ldots, n+n^2$, we
conclude
\begin{equation}\label{4.49}
R^{\ \ \ \
r}_{mlj}=k(g_{lj}\delta^r_m-g_{mj}\delta^r_l).
\end{equation}
Also, we have
\begin{equation}
\widetilde{R}(e_{\bar{m}}^T,e_l)e_{\bar{j}}^T-k\{\widetilde{g}(e_l,e_{\bar{j}}^T)e_{\bar{m}}^T
-\widetilde{g}(e_{\bar{m}}^T,e_{\bar{j}}^T)e_l\}=0.\label{n4}
\end{equation}
Setting $a=\frac{1}{kr^2}$, replacing (\ref{4.49}) in (\ref{rie1}) and then using (\ref{n4}) we obtain
\begin{eqnarray*}
\!\!\!\!&&\!\!\!\!-\frac{1}{2r^2}[g^{jl}(g_{tm}\delta^r_i-g_{im}\delta^r_t+2g_{it}\delta^r_m)+g_{it}
(g^{jr}\delta^l_m-g^{lr}\delta^j_m)]\nonumber\\
\!\!\!\!&&\!\!\!\!-\frac{1}{4r^4}[g_{ta}g^{jb}(g_{pm}g^{sr}\delta^l_it^a_st^p_b-g_{im}g^{sr}t^a_st^l_b-g_{pm}g^{lr}
t^a_it^p_b+g_{im}g^{lr}t^a_pt^p_b)
\nonumber\\
\!\!\!\!&&\!\!\!\!+g_{ta}g_{ib}(g^{sr}g^{jl}t^a_st^b_m-g^{sr}g^{lp}\delta^j_mt^a_st^b_p+g^{lr}g^{sp}\delta^j_mt^a_st^b_p
-g^{lr}g^{js}t^a_st^b_m)
\nonumber\\
\!\!\!\!&&\!\!\!\!+g^{la}g^{jb}(g_{sp}g_{im}\delta^r_tt^s_at^p_b-g_{si}g_{pm}\delta^r_tt^s_at^p_b+g_{ti}g_{pm}t^r_at^p_b
-g_{tp}g_{im}t^r_at^p_b)\nonumber\\
\!\!\!\!&&\!\!\!\!+g_{ia}g^{lb}(\delta^j_m\delta^r_t
t^p_bt^a_p-\delta^r_tt^j_bt^a_m-\delta^j_mt^r_bt^a_t+\delta^j_t
t^r_bt^a_m)]
\nonumber\\
\!\!\!\!&&\!\!\!\!+\frac{1}{2r^4}[(g_{ta}g^{sr}\delta^l_mt^a_s-g_{ta}g^{lr}t^a_m-g_{sm}g^{lb}\delta^r_tt^s_b+g_{tm}g^{lb}t^r_b
+2\delta^r_m\bar{t}^l_t)\bar{t}^j_i]=0.
\end{eqnarray*}
From the above equation in the point $(x^i, t^j_i)=(x^i, \delta^j_i)\in T^1_1M$ we get
\[
-\frac{1}{2r^2}[g^{jl}(g_{tm}\delta^r_i-g_{im}\delta^r_t+2g_{it}\delta^r_m)+g_{it}
(g^{jr}\delta^l_m-g^{lr}\delta^j_m)]+\frac{1}{r^4}\delta^r_m\delta^l_t\delta^j_i=0,
\]
which is a contradiction. Thus we conclude that the manifold
$(T_{1r}^1M,\widetilde{g})$ may never be a space form.
\end{proof}
Since for Sasaki metric $^S{g}$ we have $a=1$, then by using
Theorem \ref{n5} we have
\begin{cor}
The $(1, 1)$-tensor sphere bundle $T_{1r}^1M$, endowed with the metric
induced by the Sasaki metric $^S{g}$ from $T_{1}^1M$, is never a space form.
\end{cor}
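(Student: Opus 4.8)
The plan is to argue by contradiction: assume $(T_{1r}^1M,\widetilde{g})$ has constant sectional curvature $k$, so that its curvature tensor obeys the model identity $\widetilde{R}(\widetilde{X},\widetilde{Y})\widetilde{Z}=k\{\widetilde{g}(\widetilde{Y},\widetilde{Z})\widetilde{X}-\widetilde{g}(\widetilde{X},\widetilde{Z})\widetilde{Y}\}$. I would then test this identity on three carefully chosen triples of adapted frame fields, each selected so that exactly one of the explicit curvature components listed just above the theorem is isolated and the rest drop out.

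First I would take the purely tangential triple $(e_{\overline{m}}^T,e_{\overline{l}}^T,e_{\overline{j}}^T)$ and compare the model identity with the $TTTT$ component. Since $\widetilde{g}$ on tangential vectors carries the factor $a$, and $\widetilde{g}(e^T_{\overline{l}},e^T_{\overline{j}})$ has the form of the Cheeger--Gromoll fiber block, this produces precisely a relation of the shape treated in Lemma \ref{esileila}. Invoking that lemma forces $k\neq 0$ and pins down $a=\frac{1}{kr^2}$.

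Next I would use the purely horizontal triple $(e_m,e_l,e_j)$ against the $HHHH$ component. Because $\widetilde{g}$ on horizontal vectors is just the base metric, whereas $HHHH$ equals the base curvature $R_{mlj}^{\ \ \ r}$ plus terms quadratic in the fiber coordinate $t$, I would differentiate the resulting equation twice in the fiber coordinates $x^{\overline{j}}$ to annihilate the quadratic clutter and extract the clean relation $R_{mlj}^{\ \ \ r}=k(g_{lj}\delta^r_m-g_{mj}\delta^r_l)$. In other words, the base $(M,g)$ itself must be a space form of curvature $k$.

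Finally, the contradiction. Taking the mixed triple $(e_{\overline{m}}^T,e_l,e_{\overline{j}}^T)$ and matching it with the $HTTH$ component, I would substitute $a=\frac{1}{kr^2}$ together with the base space-form condition just obtained. The identity then reduces to a polynomial relation in the fiber variables $t^j_i$, and evaluating it at the special point $t^j_i=\delta^j_i$ collapses everything to an algebraic identity among $g$ and Kronecker deltas that cannot hold. I expect this last step to be the main obstacle: the $HTTH$ component is a long expression, and the delicate part is carrying out the two substitutions correctly and then recognizing that a single convenient fiber point already breaks the identity, so that no value of $k$ can be consistent.
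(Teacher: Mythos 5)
Your argument is correct and is essentially the paper's own: you have reconstructed, step for step, the proof of Theorem \ref{n5} (the purely tangential block combined with Lemma \ref{esileila} to force $k\neq 0$ and $a=\frac{1}{kr^2}$, the purely horizontal block differentiated twice in the fiber coordinates to force $R_{mlj}^{\ \ \ r}=k(g_{lj}\delta^r_m-g_{mj}\delta^r_l)$, and the mixed block evaluated at $t^j_i=\delta^j_i$ to reach the contradiction). The paper itself disposes of this corollary in a single line, by observing that the Sasaki metric is the special case $a=1$ of the already-established Theorem \ref{n5}, so the only difference is that you re-derive the general theorem rather than citing it.
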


\end{document}